\documentclass[oneside,a4paper,11pt,notitlepage]{article}
\usepackage[left=0.8in, right=0.8in, top=1.5in, bottom=1.5in]{geometry}
\usepackage[T1]{fontenc}
\usepackage[utf8]{inputenc} 
\usepackage{tocloft}

\AtBeginDocument{\selectfont}

\usepackage[english]{babel}
\usepackage{float}
\usepackage{amssymb}
\usepackage{lmodern}

\usepackage{amsmath}
\usepackage{amsthm}
\usepackage{bm}
\usepackage{mathtools}
\usepackage{braket}
\usepackage{esint}
\newcommand{\abs}[1]{{\left|#1\right|}}

\usepackage{enumerate}
\usepackage{booktabs}
\usepackage{graphicx}
\usepackage{tikz}
\usetikzlibrary{patterns}
\usepackage{multicol}
\usepackage{caption}
\usepackage[skins,theorems]{tcolorbox}
\newcommand{\Hersch}{\mathcal S}
\newcommand{\Simm}{\mathcal A}
\newcommand{\palle}{\mathcal B}
\newcommand{\herintsim}{ \Hersch \cap  \Simm }
\newcommand{\betaposneg}{(-\infty,0)^2\cup(0, +\infty]^2}

\tcbset{highlight math style={enhanced,
colframe=black,colback=white,arc=0pt,boxrule=1pt}}
\def\XXint#1#2#3{{\setbox0=\hbox{$#1{#2#3}{\int}$}
    \vcenter{\hbox{$#2#3$}}\kern-.5\wd0}}

\usepackage[hyphens]{url}
\usepackage{hyperref}
\usepackage{aliascnt}

\hypersetup{
                colorlinks=true,
                linkcolor={magenta},
                citecolor={cyan},
                breaklinks=true}

\theoremstyle{plain}
\newtheorem{teor}{Theorem}
\numberwithin{teor}{section}
\numberwithin{equation}{section}
\newenvironment{teorema}{\begin{teor}}{\end{teor}}

\theoremstyle{definition}
\newaliascnt{defi}{teor}
\newtheorem{defi}[defi]{Definition}
\aliascntresetthe{defi}
\newenvironment{definizione}{\begin{defi}}{\end{defi}}

\theoremstyle{plain}
\newaliascnt{lemma}{teor}
\newtheorem{lemma}[lemma]{Lemma}
\aliascntresetthe{lemma}
 
\theoremstyle{plain}
\newaliascnt{prop}{teor}
\newtheorem{prop}[prop]{Proposition}
\aliascntresetthe{prop}
 
\theoremstyle{plain}
\newaliascnt{conjecture}{teor}

\aliascntresetthe{conjecture}

\theoremstyle{plain}
\newaliascnt{cor}{teor}
\newtheorem{cor}[cor]{Corollary}
\aliascntresetthe{cor}
\newenvironment{corollario}{\begin{cor}}{\end{cor}}
 
\theoremstyle{definition}
\newaliascnt{ex}{teor}

\aliascntresetthe{ex}
 
\theoremstyle{definition}
\newaliascnt{oss}{teor}
\newtheorem{oss}[oss]{Remark}
\aliascntresetthe{oss}
 
\theoremstyle{plain}

\DeclareMathOperator{\R}{\mathbb{R}}

\makeatletter
\newcommand{\myfootnote}[2]{\begingroup
	\def\@makefnmark{}
	\addtocounter{footnote}{-1}
	\footnote{\textbf{#1} #2}
	\endgroup}
\makeatother

\usepackage{hyperref}
\hypersetup{linktoc=none, bookmarksnumbered, colorlinks=true, linkcolor=magenta, citecolor=cyan}

\title{On the Effectless Cut Method for Laplacian Eigenvalues in any dimensions}
\author{Vincenzo Amato, Nunzia Gavitone, Francesca de Giovanni}
\date{}

\newcommand{\Addresses}{{ 
 \bigskip

  \noindent\textit{E-mail address}, V. ~Amato: \texttt{v.amato@ssmeridionale.it} 

   \medskip 
 
  \noindent\textsc{Mathematical and Physical Sciences for Advanced Materials and Technologies, Scuola Superiore Meridionale, Largo San Marcellino 10, 80138 Napoli, Italy. }

 \medskip
 \noindent\textit{E-mail address}, N.~Gavitone: \texttt{nunzia.gavitone@unina.it}

  \medskip 
 
 \noindent\textit{E-mail address}, F.~de Giovanni: \texttt{francesca.degiovanni@unina.it} 
   \medskip

 \noindent\textsc{Dipartimento di Matematica e Applicazioni ``R. Caccioppoli'', Universit\`a degli studi di Napoli Federico II, Via Cintia, Complesso Universitario Monte S. Angelo, 80126 Napoli, Italy.}

 \par\nopagebreak 

}} 

\begin{document}

\maketitle

   \begin{abstract}
In this paper, we study the optimization of the first Laplacian eigenvalue on axisymmetric doubly connected domains under positive Robin boundary conditions. Under additional geometric constraints, we prove that spherical shells maximize this eigenvalue. Our approach combines known isoperimetric inequalities for mixed Laplacian eigenvalues with a higher-dimensional extension of the \emph{effectless cut} technique introduced by Hersch to study multiply connected membranes of given area fixed along their boundaries.

\vspace{5pt}
\textsc{Keywords:} effectless cut, gradient flow, reverse Faber-Krahn inequality, Robin boundary condition, Laplace eigenvalues.\\
\textsc{MSC 2020: 35P05, 35P15.}  
\end{abstract}

\begin{center}
\begin{minipage}{10cm}
\small
\tableofcontents
\end{minipage}
\end{center}

\section{Introduction}

Shape optimization problems for Laplacian eigenvalues in multiply connected domains have attracted increasing attention in recent years. In this setting, different boundary conditions may be imposed on the two disjoint components of the boundary and, under suitable geometric constraints, a natural question arises as to whether the spherical shell is an optimal configuration.

The answer strongly depends on the interplay between geometry and boundary conditions, which profoundly influences both qualitative properties and sharp quantitative bounds for the first eigenvalue. Such estimates have important applications in mathematical physics, particularly in the study of eigenvalue problems associated with vibrating membranes.

In this paper, we consider two open and convex sets $\Omega_{in} \subset\subset \Omega_{out} \subset \mathbb{R}^n$, and we define the doubly connected domain
\[
\Omega = \Omega_{out} \setminus \overline{\Omega}_{in}.
\]

For given parameters $(\beta_{in}, \beta_{out}) \in \betaposneg$, we consider the first eigenvalue problem
\begin{equation}
    \label{probprinc}
    \begin{cases}
    -\Delta u = \lambda^{RR}(\Omega)u \qquad & \text{in } \Omega, \\
     \displaystyle \frac{\partial u }{\partial \nu} + \beta_{in} u = 0  \qquad & \text{on } \partial\Omega_{in},\\
    \displaystyle \frac{\partial u }{\partial \nu} + \beta_{out} u = 0  \qquad & \text{on } \partial\Omega_{out}.
\end{cases}
\end{equation}
where $\nu$ denotes the outward unit normal.

The first eigenvalue $\lambda^{RR}(\Omega)$ can also be characterized by the Rayleigh quotient
\begin{equation}
    \label{ray}
    \lambda^{RR}(\Omega) = \min_{\substack{w \in H^{1}(\Omega) \\ w \neq 0}} 
    \frac{\displaystyle \int_\Omega |\nabla w|^2\, dx 
    + \beta_{in} \int_{\partial\Omega_{in}} |w|^2\, d\mathcal{H}^{n-1}
    + \beta_{out} \int_{\partial\Omega_{out}} |w|^2\, d\mathcal{H}^{n-1}}
    {\displaystyle \int_\Omega |w|^2\, dx}.
\end{equation}

Throughout the paper, we adopt the following notation. The first eigenvalue of the Laplacian on a doubly connected domain $\Omega$ is denoted by $\lambda^{XY}(\Omega)$, where $X,Y \in \{R,N,D\}$ indicate the boundary conditions imposed on $\partial \Omega_{in}$ and $\partial \Omega_{out}$, respectively (Robin, Neumann, or Dirichlet). For instance, $\lambda^{DD}(\Omega)$ denotes the first eigenvalue with Dirichlet conditions on both boundary components.

Moreover, any eigenfunction is analytic in $\Omega$ and belongs to $C^{1}(\overline{\Omega})$; see \cite[Proposition~2.1]{asma} and \cite[Proposition~1.6]{lena} for explicit references and further discussion.

\medskip
The aim of this paper is to derive optimal bounds for $\lambda^{RR}(\Omega)$, also including the limiting cases $\beta_{in}, \beta_{out} \in \{0,+\infty\}$, which correspond to Neumann and Dirichlet boundary conditions, respectively.

A key step in our approach is to prove the existence of a set $G$ such that 
\[
\Omega_{\mathrm{in}} \subset\subset G \subset\subset \Omega_{\mathrm{out}},
\]
which allows us to decompose $\Omega$ into two subdomains $G_{in} = G \cap \Omega$ and $G_{out} = \Omega \setminus \overline{G}$ without decreasing the first eigenvalue.

In the planar case, this property, proved by Weinberger in \cite{Weinberger} (see also \cite{BBV}), was used by Hersch \cite{hersch} to establish an upper bound for $\lambda^{DD}(\Omega)$. Indeed, by decomposing $\Omega$ into two subdomains so that
\[
\lambda^{DD}(\Omega) = \lambda^{DN}(G_{in}) = \lambda^{ND}(G_{out}),
\]
the problem reduces to proving isoperimetric inequalities for the mixed eigenvalues $\lambda^{DN}(G_{in})$ and $\lambda^{ND}(G_{out})$ separately.

Extending this strategy to higher dimensions is considerably more delicate, since the classical planar tools do not directly generalize. To the best of our knowledge, this remains an open problem.

To overcome this difficulty, we introduce a suitable class of admissible domains.,
\begin{definizione}
\label{assisimm}
We denote by $\Simm$ the class of doubly connected domains 
$\Omega = \Omega_{out} \setminus \overline{\Omega}_{in}$, where 
$\Omega_{out}, \Omega_{in} \subset \mathbb{R}^n$, $n \ge 2$, are open, bounded, axisymmetric sets with $C^{1,1}$ boundary, sharing the same axis of symmetry.\end{definizione}

We can now state our main result.

\begin{teorema}\label{taglio}
Let $\Omega = \Omega_{out} \setminus \overline{\Omega}_{in} \in \Simm$. Let $u \in C^{1}(\overline{\Omega})$ be a positive eigenfunction associated with the first eigenvalue of problem~\eqref{probprinc}. Then there exists an open set $G$ such that
\[
\Omega_{\mathrm{in}} \subset\subset G \subset\subset \Omega_{\mathrm{out}},
\]
and
\[
\frac{\partial u}{\partial \nu}(x) = 0
\qquad \mathcal{H}^{n-1}\text{-a.e.\ on } \partial_\ast G,
\]
where $\partial_\ast G$ denotes the essential boundary of $G$, i.e., the subset of $\partial G$ on which the outward unit normal is well defined (see Appendix \ref{app} for its precise definition).

\end{teorema}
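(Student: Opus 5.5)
We use the gradient flow of the eigenfunction $u$ and take $G$ to be the region enclosed by a ``ridge'' of $u$. By axisymmetry, $u$ is invariant under rotations about the common axis, since the first eigenvalue is simple and $u$ is positive. So $u$ is determined by its restriction to a meridian half-plane $\Pi=\{(r,z): r\ge 0\}$, where $r$ is the distance from the axis. This reduces the geometry to a planar, doubly connected meridian region $\omega=\Omega\cap\Pi$, bounded by the profile curves $\gamma_{in}$ and $\gamma_{out}$ together with two segments of the axis. Weinberger's planar argument has no direct $n$-dimensional analogue. The strategy is to run it in $\omega$, with the axis segments playing the role of a symmetry line on which $\partial_r u=0$, and then rotate the result back.

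\textbf{Step 1: boundary behaviour.} Since $u>0$ is $C^1(\overline\Omega)$, the sign of $\partial_\nu u=-\beta u$ on each boundary component is fixed. For $(\beta_{in},\beta_{out})\in(0,+\infty]^2$ the flux is inward on both components: $-\nabla u$ points into $\Omega$ at $\partial\Omega_{in}$ and at $\partial\Omega_{out}$. In the Dirichlet case we use Hopf's lemma. For negative $\beta$'s the flux is outward on both, and the roles of ascent and descent are exchanged. Either way, the flow of $\nabla u$ (ascending) starting from points near $\partial\Omega_{in}$ enters $\Omega$.

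\textbf{Step 2: the basins.} Define
\[
A_{in}=\{x\in\Omega:\ \text{the descending trajectory of } -\nabla u \text{ through } x \text{ tends to } \partial\Omega_{in}\},
\]
and define $A_{out}$ analogously. Both are open and axisymmetric, since trajectories are smooth away from critical points and rotations map trajectories to trajectories. Critical points of the analytic $u$ form an analytic set. By the maximum principle and the fact that $\lambda>0$ (or the appropriate sign), $u$ has no interior local minima. Hence every descending trajectory either reaches $\partial\Omega$ or accumulates on a critical set, and the latter happens only on the measure-zero stable manifolds of saddle-type critical sets. Set $G=\Omega_{in}\cup\overline{A_{in}}^{\circ}$, so that $\partial G\subset\partial A_{in}\cap\partial A_{out}$ up to critical points. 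Then $\Omega_{in}\subset\subset G$, because near $\partial\Omega_{in}$ every point flows to $\partial\Omega_{in}$ by Step 1, and likewise $G\subset\subset\Omega_{out}$.

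\textbf{Step 3: the normal derivative on $\partial_\ast G$.} The separatrix $\partial G\cap\Omega$ is a union of trajectories of $\nabla u$ together with critical points, since the boundary of a basin is invariant under the flow. Hence at points of $\partial G$ that are regular for $u$, $\nabla u$ is tangent to $\partial G$. Wherever $\partial G$ has a normal, this gives $\partial_\nu u=0$. At critical points $\nabla u=0$ trivially. In the meridian plane, the analytic structure of $u$ shows that the separatrix is a finite union of analytic arcs (stable manifolds of saddles of $u|_\omega$) plus finitely many points. Its rotation is therefore a rectifiable hypersurface, and $G$ has finite perimeter, so $\partial_\ast G$ makes sense and the condition holds $\mathcal H^{n-1}$-a.e.

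\textbf{Main obstacle.} The hard part is controlling the critical set and the separatrix in order to get finite perimeter and a.e. tangency. In particular, we must show that $u|_\omega$ has only finitely many critical points in compact subsets and that no critical curve or nondegenerate accumulation of trajectories produces a piece of $\partial G$ of positive measure. Near the axis, the reduced equation $u_{rr}+\frac{n-2}{r}u_r+u_{zz}=-\lambda u$ is degenerate, and we need $\partial_r u=0$ on the axis to ensure the separatrix meets the axis transversally. We would first try Łojasiewicz's gradient inequality for analytic functions, which guarantees that trajectories converge to single critical points and have finite length. This would be combined with the structure theorem for the zero set of $\nabla u$ in the two-dimensional meridian section.
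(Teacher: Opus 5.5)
Your construction is the paper's. Your $A_{in},A_{out}$ are its $G_{in},G_{out}$, your $G$ is (up to a slip noted below) its $G^\ast_{in}\cup\overline{\Omega}_{in}$, and your Step~3 is Lemma~\ref{G_aperti2} followed by the paper's tangency argument.

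You depart from the paper in one place: the claim that the two basins fill $\Omega$ up to a thin set, which is the analogue of Lemma~\ref{pgin=pgout}. There your justification does not work. Absence of interior local minima does not make the set of points whose descending trajectories converge to critical points negligible. For $f(x,y)=x^3+y^3$ the origin is an isolated critical point and not a minimum, yet the whole quadrant $\{x>0,\ y>0\}$ flows into it under $-\nabla f$. For $f=x_1^3$, a critical hyperplane attracts a half-space. {\L}ojasiewicz's inequality and the structure of the analytic set $\{\nabla u=0\}$ do not close this either. They tell you that each trapped trajectory converges to a single critical point, and that the critical points form finitely many points and arcs, but not that their basins are small. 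So Step~2 is unsupported as written. So is the claim in Step~3 that the separatrix is a finite union of analytic arcs, which is your route to finite perimeter.

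The missing ingredient is the equation itself, not just the maximum principle.

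\begin{itemize}
\item The paper excludes a trapped open region by integrating $-\Delta u=\lambda^{RR}(\Omega)u$ over a component $\Omega_0$ of $\Omega\setminus(\overline{G}^\ast_{in}\cup\overline{G}^\ast_{out})$. Its boundary consists of flow lines and critical points, so the flux vanishes, while $\lambda^{RR}(\Omega)\int_{\Omega_0}u\neq0$.
\item A version that needs no perimeter bound: for positive $\beta$'s, $\operatorname{div}(-\nabla u)=\lambda^{RR}(\Omega)u>0$. The trapped set $T=\Omega\setminus(A_{in}\cup A_{out})$ is forward invariant under the flow $\phi_t$ of $-\nabla u$ and stays in a compact subset of $\Omega$. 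Liouville's formula then gives $|T|\ge|\phi_t(T)|\ge e^{ct}|T|$ with $c>0$, hence $|T|=0$.
\item For your finer structural claim, the same equation is what you need. At every critical point $\operatorname{tr}D^2u=-\lambda^{RR}(\Omega)u\neq0$, so $D^2u$ has a strictly negative eigenvalue. This excludes the examples above and makes every critical curve of the meridian section a transversally nondegenerate ridge. It also bounds the centre-stable manifold of each isolated critical point of the section by dimension one.
\end{itemize}

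The tangency alone only uses $\partial G\cap\Omega\subset\partial A_{in}$, which holds for $\operatorname{Int}\overline{A_{in}}$ regardless. The paper merely asserts that $G$ has finite perimeter, so a repaired Step~3 would go beyond it.

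Two minor slips:
\begin{itemize}
\item For positive $\beta$'s, $\nabla u\cdot\nu<0$, so it is $\nabla u$, not $-\nabla u$, that points into $\Omega$. Your later use of descending trajectories exiting through $\partial\Omega_{in}$ needs the correct sign.
\item $\Omega_{in}\cup\operatorname{Int}\overline{A_{in}}$ misses $\partial\Omega_{in}$, so it does not satisfy $\Omega_{in}\subset\subset G$. Take $\overline{\Omega}_{in}\cup\operatorname{Int}\overline{A_{in}}$ instead, which is open because $A_{in}$ contains a one-sided collar of $\partial\Omega_{in}$.
\end{itemize}
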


Moreover, a more general version of Theorem~\ref{taglio} can be established for a suitable class of functions, as shown in Proposition~\ref{taglio2}.

In the spirit of Hersch, we use Theorem~\ref{taglio} to establish upper bounds for $\lambda^{RR}(\Omega)$.

The axisymmetry assumption allows us to extend the effectless cut argument to higher dimensions. We also consider a class of domains for which the sharp isoperimetric inequalities required for the mixed eigenvalues are available; see \cite{DPP,Paoli_Piscitelli_Trani}.

\begin{definizione}
\label{doppioconvessi}
Let $\Hersch$ denote the class of doubly connected domains $\Omega = \Omega_{out} \setminus \overline{\Omega}_{in}$, where $\Omega_{out}, \Omega_{in} \subset \mathbb{R}^n$, $n \ge 2$, are open, bounded, and convex sets satisfying
\begin{equation}
\label{nunziadef}
\frac{|\Omega|}{\omega_n} = \left( \frac{P(\Omega_{out})}{n\omega_n} \right)^{\frac{n}{n-1}} - \left( \frac{W_{n-1}(\Omega_{in})}{\omega_n} \right)^{n}.
\end{equation}
\end{definizione}

Our main application is therefore stated for domains in the intersection $\Hersch \cap \Simm$.

\begin{teorema}\label{thm:main}
Let $\Omega \in \Hersch \cap \Simm$ and $(\beta_{in}, \beta_{out}) \in (0, +\infty]^2$. 
Let $A_{R_1,R_2}$, with $0 < R_1 < R_2$, be the spherical shell such that
\[
|A_{R_1,R_2}| = |\Omega|,\qquad 
P(B_{R_2}) = P(\Omega_{out}),\qquad 
W_{n-1}(B_{R_1}) = W_{n-1}(\Omega_{in}).
\]
Then
\begin{equation}
    \label{teo1.4}
    \lambda^{RR}(\Omega) \le \lambda^{RR}(A_{R_1,R_2}).
\end{equation}
Moreover the equality occours in \eqref{teo1.4} if and only if, up to translations, $\Omega=A_{R_1,R_2},$.
\end{teorema}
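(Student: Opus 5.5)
We follow Hersch's strategy, using Theorem~\ref{taglio} as the effectless cut. Let $u>0$ be the first eigenfunction of \eqref{probprinc} on $\Omega\in\Hersch\cap\Simm$, and let $G$ be the set given by Theorem~\ref{taglio}. Set $G_{in}=G\cap\Omega$ and $G_{out}=\Omega\setminus\overline G$. Since $\partial u/\partial\nu=0$ $\mathcal H^{n-1}$-a.e.\ on $\partial_\ast G$, the restriction of $u$ to each piece is a positive eigenfunction of the mixed problem with Robin condition ($\beta_{in}$, respectively $\beta_{out}$) on the original boundary component and Neumann condition on the cut. A positive eigenfunction must be the first one, so
\[
\lambda^{RR}(\Omega)=\lambda^{RN}(G_{in})=\lambda^{NR}(G_{out}).
\]
Making this rigorous requires an integration by parts on $G_{in}$ and $G_{out}$, which are only sets of finite perimeter; this is where the essential boundary of Appendix~\ref{app} is used. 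One then identifies the weak mixed eigenvalue and its variational characterization: the Rayleigh quotient over $H^1(G_{in})$ with a boundary term only on $\partial\Omega_{in}$.

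On the radial side, the spherical shell $A_{R_1,R_2}$ has a radial first eigenfunction. Its radial derivative vanishes on a unique sphere $\partial B_\rho$ with $R_1<\rho<R_2$. This follows by an ODE argument when $\beta_{in},\beta_{out}>0$: the eigenfunction decreases near both boundaries toward the boundary, so it has an interior maximum. Hence
\[
\lambda^{RR}(A_{R_1,R_2})=\lambda^{RN}(A_{R_1,\rho})=\lambda^{NR}(A_{\rho,R_2}).
\]
Moreover $\lambda^{RN}(A_{R_1,r})$ is strictly decreasing in $r$ and $\lambda^{NR}(A_{r,R_2})$ is strictly increasing in $r$, by domain monotonicity for Neumann-type cuts on radial shells, which can be checked with the radial ODE.

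Next choose the comparison shells. Let $r_1$ be such that $|A_{R_1,r_1}|=|G_{in}|$, and let $r_2$ be such that $|A_{r_2,R_2}|=|G_{out}|$. The constraint $|A_{R_1,R_2}|=|\Omega|$ forces $r_1=r_2=:r$. We then apply the known sharp isoperimetric inequalities for mixed eigenvalues from \cite{DPP,Paoli_Piscitelli_Trani}:
\begin{itemize}
\item for the inner piece, Robin on the convex hole $\Omega_{in}$ and Neumann outside, with fixed volume and fixed $W_{n-1}(\Omega_{in})$, the shell maximizes, giving $\lambda^{RN}(G_{in})\le\lambda^{RN}(A_{R_1,r})$;
\item for the outer piece, Robin on the convex outer boundary with fixed perimeter and volume, giving $\lambda^{NR}(G_{out})\le\lambda^{NR}(A_{r,R_2})$.
\end{itemize}
Condition \eqref{nunziadef} is precisely the compatibility hypothesis those results require. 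It should let the volume of each piece be controlled by the quermassintegral or perimeter data, so that the cited inequalities apply to the pieces even though the cut $\partial G$ is not convex; checking this carefully is the main obstacle.

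Combining the two inequalities with the monotonicity:
\begin{itemize}
\item if $r\le\rho$, then $\lambda^{RR}(\Omega)=\lambda^{NR}(G_{out})\le\lambda^{NR}(A_{r,R_2})\le\lambda^{NR}(A_{\rho,R_2})=\lambda^{RR}(A_{R_1,R_2})$;
\item if $r\ge\rho$, use the inner piece symmetrically.
\end{itemize}
This proves \eqref{teo1.4}.

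The Dirichlet cases $\beta=+\infty$ are handled identically, with $D$ in place of $R$, or by letting $\beta\to\infty$ in the inequality.

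For equality, every step must be an equality. In particular one of the mixed isoperimetric inequalities is attained, and its rigidity statement forces that piece to be a shell. Then $\Omega_{in}$ or $\Omega_{out}$ is a ball. The other constraint in $\Hersch$, an equality case of Aleksandrov–Fenchel type in \eqref{nunziadef}, together with the opposite piece's rigidity, then forces concentric balls, giving $\Omega=A_{R_1,R_2}$ up to translation.
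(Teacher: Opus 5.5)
Your outline is the paper's proof: the effectless cut, the identification $\lambda^{RR}(\Omega)=\lambda^{RN}(G\cap\Omega)=\lambda^{NR}(\Omega\setminus\overline G)$, the two mixed inequalities applied to the pieces, a common radius $r$, monotonicity in $r$, and the splitting of $A_{R_1,R_2}$ at the maximum sphere $\rho$. Your case split $r\le\rho$ versus $r\ge\rho$ is a slightly more direct form of the paper's max--min step, and your computation $r_1=r_2$ is exactly the paper's $R_3=R_4$.

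\textbf{The gap.} The problem is the step you flag as the main obstacle: you leave it open, and the mechanism you propose would not close it. Condition \eqref{nunziadef} involves only $|\Omega|$, $P(\Omega_{out})$ and $W_{n-1}(\Omega_{in})$. It says nothing about the cut, so it cannot make the cited inequalities applicable to $G\cap\Omega$ and $\Omega\setminus\overline G$. Its real role is one you already used without noticing. It is equivalent to the existence of a single shell with $|A_{R_1,R_2}|=|\Omega|$, $P(B_{R_2})=P(\Omega_{out})$ and $W_{n-1}(B_{R_1})=W_{n-1}(\Omega_{in})$. That is precisely what lets the inner comparison shell (fixed $W_{n-1}(\Omega_{in})$ and $|G\cap\Omega|$) and the outer one (fixed $P(\Omega_{out})$ and $|\Omega\setminus\overline G|$) share the radius $r$. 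The inequalities themselves do not require it.

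\textbf{What removes the obstacle.} In both mixed inequalities, convexity is needed only on the \emph{Robin} component. The test function is a function of the distance to that component, and Steiner's formula and Aleksandrov--Fenchel are applied to its parallel sets; in the inner case, for instance, $\mathcal H^{n-1}(\partial E_t\cap\Omega)\le P(\Omega_{in}+\rho B)$. The Neumann component, here $\partial G$, is neither convex nor known to be Lipschitz, but it only truncates these level sets. So it needs only finite perimeter. This is why the paper restates Theorems~\ref{DPP} and~\ref{PPT} with the Neumann component merely of finite perimeter, working in Maz'ya's $W^{1,2}$. It pairs them with the Picone-type Lemma~\ref{autofpos} to identify $\lambda^{RR}(\Omega)$ with the mixed eigenvalues on these non-Lipschitz pieces. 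Without this observation your two key inequalities are unjustified.

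\textbf{Equality case.} Strict monotonicity forces $r=\rho$, so both mixed inequalities are equalities. The two rigidity statements then give $G\cap\Omega=A_{R_1,r}$ and $\Omega\setminus\overline G=A_{r,R_2}$ directly, as in the paper. Your extra observation is correct but unnecessary: \eqref{nunziadef} equates the isoperimetric deficit of $\Omega_{out}$ with the Aleksandrov--Fenchel deficit $(W_{n-1}(\Omega_{in})/\omega_n)^n-|\Omega_{in}|/\omega_n$, so one ball forces the other.
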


Before turning to the proof of Theorem~\ref{thm:main}, in Section~\ref{Sezione4} we show that the class $\Hersch \cap \Simm$ is nonempty.

\begin{prop}
\label{nonempclass}
Let $\palle$ be the class of doubly connected domains $\Omega = B_{out} \setminus \overline{B}_{in}$, where $B_{in}\subset B_{out}$ are two balls of $\mathbb{R}^n$, not necessarily concentric. Then $\palle\subsetneq\herintsim$, i.e. there exist doubly connected domains that are neither
concentric nor translated spherical shells.
\end{prop}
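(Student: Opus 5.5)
Two things must be shown: first, that every $\Omega=B_{out}\setminus\overline B_{in}$ with $B_{in}\subset\subset B_{out}$ belongs to $\Simm$ and to $\Hersch$; second, that $\herintsim$ contains a domain which is not the difference of two balls, which gives the strict inclusion. For the first part, two balls are always axisymmetric about the line through their centres (any line through the common centre if they are concentric). Their boundaries are smooth, so in particular $C^{1,1}$. Hence $\palle\subset\Simm$. Balls are convex, and for balls of radii $r<R$ we have $P(B_R)/(n\omega_n)=R^{n-1}$ and $W_{n-1}(B_r)/\omega_n=r$. The right-hand side of \eqref{nunziadef} is therefore $R^n-r^n$, while $|\Omega|/\omega_n=R^n-r^n$ since $B_r\subset B_R$. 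This holds whether or not the balls are concentric, so $\palle\subset\Hersch$.

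For the strict inclusion, I would keep $\Omega_{in}=B_r$ and replace the outer ball by a convex axisymmetric body that is not a ball. Because $\Omega_{in}\subset\Omega_{out}$, we have $|\Omega|=|\Omega_{out}|-\omega_n r^n$, and \eqref{nunziadef} reduces to
\[
\frac{|\Omega_{out}|}{\omega_n}=\Big(\frac{P(\Omega_{out})}{n\omega_n}\Big)^{\frac{n}{n-1}} .
\]
This is equality in the isoperimetric inequality, which forces $\Omega_{out}$ to be a ball. So the outer set cannot be changed while the inner one is a ball.

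I would therefore keep $\Omega_{out}=B_R$ and vary $\Omega_{in}$ among convex axisymmetric bodies. Now \eqref{nunziadef} becomes
\[
|\Omega_{in}|=\omega_n\Big(\frac{W_{n-1}(\Omega_{in})}{\omega_n}\Big)^n ,
\]
which is equality in the Urysohn-type inequality $|K|\le\omega_n (W_{n-1}(K)/\omega_n)^n$ (mean width versus volume). That equality again characterizes balls. So if both $\Omega_{out}$ and $\Omega_{in}$ are convex, \eqref{nunziadef} together with the two inequalities
\[
|\Omega_{out}|\le\omega_n\Big(\frac{P(\Omega_{out})}{n\omega_n}\Big)^{\frac{n}{n-1}},\qquad |\Omega_{in}|\le\omega_n\Big(\frac{W_{n-1}(\Omega_{in})}{\omega_n}\Big)^n
\]
shows that \eqref{nunziadef} is equivalent to the identity $|\Omega_{out}|-|\Omega_{in}|=\omega_n\big[\big(P(\Omega_{out})/(n\omega_n)\big)^{n/(n-1)}-\big(W_{n-1}(\Omega_{in})/\omega_n\big)^n\big]$. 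Here the deficits of the two inequalities enter with opposite signs, so they can cancel.

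The construction should therefore be: take $\Omega_{out}$ a convex axisymmetric body that is not a ball, with isoperimetric deficit $\delta_{out}>0$. Then choose $\Omega_{in}$ a convex axisymmetric body that is not a ball, compactly contained in $\Omega_{out}$, with Urysohn deficit $\delta_{in}=\delta_{out}$. One concrete choice is two coaxial ellipsoids of revolution (or cylinders with rounded caps, to keep $C^{1,1}$). I would fix $\Omega_{out}$ as an ellipsoid close to a ball of radius $R$, and take $\Omega_{in}$ as a scaled-down ellipsoid $tE_s$ of small size with eccentricity parameter $s$. The Urysohn deficit scales like $t^n$ times a continuous function $g(s)$ with $g(0)=0$, and $g(s)$ is unbounded, or at least large, as $s$ degenerates, e.g. a flat disc-like body has volume near $0$ but positive mean width. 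By the intermediate value theorem in $s$, with $t$ small enough that $tE_s\subset\subset\Omega_{out}$, the deficits can be made equal.

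The main obstacle is this matching step. I have to check that the Urysohn deficit of the small inner body can reach $\delta_{out}$ while the body stays inside $\Omega_{out}$. I would secure this by taking $\Omega_{out}$ very close to a ball, so that $\delta_{out}$ is tiny, and letting $s$ vary continuously from $0$, since the deficit of $tE_s$ increases continuously from $0$. The resulting $\Omega$ is then convex-differenced, axisymmetric, $C^{1,1}$ and satisfies \eqref{nunziadef}. Neither of its boundary components is a sphere, so it is not in $\palle$.
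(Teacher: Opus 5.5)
Your proof is correct. It runs on the same mechanism as the paper's, but matches the deficits differently.

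\textbf{Common core.} Condition \eqref{nunziadef} says exactly that the Urysohn deficit $\delta_{in}=(W_{n-1}(\Omega_{in})/\omega_n)^n-|\Omega_{in}|/\omega_n$ equals the isoperimetric deficit $\delta_{out}=(P(\Omega_{out})/(n\omega_n))^{n/(n-1)}-|\Omega_{out}|/\omega_n$. The paper's function $f(a,b)$ is precisely $\delta_{in}(\Gamma_{in}(b))-\delta_{out}(\Gamma_{out}(a))$.

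\textbf{The paper's route.} It starts from arbitrary coaxial convex non-balls sandwiched as $B_{in}\subset\subset\Omega_{in}\subset\subset\Omega_{out}\subset\subset B_{out}$. It deforms each toward a ball by Minkowski combination. Containment is then automatic, since $\Gamma_{in}(b)\subset\Omega_{in}\subset\subset\Omega_{out}\subset\Gamma_{out}(a)$. A zero of $f$ follows from $f(1,0)>0>f(0,1)$, implicitly along the segment $a+b=1$, whose interior points give non-round components.

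\textbf{Your route.} You use explicit coaxial ellipsoids and a one-parameter intermediate value argument, so you must arrange containment by hand. This works if the choices are made in the right order, which your text leaves somewhat loose:
\begin{enumerate}
\item First fix $t$ and $s_0>0$ with $tE_s\subset\subset B_R$ for all $s\in[0,s_0]$.
\item Then take a nearly round coaxial ellipsoid $\Omega_{out}\supset B_R$ with $0<\delta_{out}<t^n g(s_0)$.
\item Finally apply the intermediate value theorem to the continuous map $s\mapsto t^n g(s)$ on $[0,s_0]$.
\end{enumerate}
Neither monotonicity nor unboundedness of $g$ is needed, only $g(0)=0<g(s_0)$.

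\textbf{What each buys.} Your version is concrete. $C^{1,1}$ regularity and non-roundness are immediate for ellipsoids. You verify $\palle\subset\herintsim$ explicitly, which the paper omits. Your remark that one round component forces the other to be round explains why both must be perturbed. The paper's version is more general: any sandwiched pair of coaxial convex non-balls yields an example, with no quantitative tuning. The cost is that it leaves implicit why the intermediate Minkowski combinations are $C^{1,1}$ and not balls.
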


Our result may be viewed as a reverse Faber–Krahn type inequality. The classical Faber–Krahn inequality \cite{faber,krahn} states that the ball minimizes the first Dirichlet eigenvalue under volume constraint. Analogous results hold for positive Robin boundary conditions, while the Szegő–Weinberger inequality characterizes the maximizer of the first nontrivial Neumann eigenvalue.

For multiply connected planar domains, pioneering contributions are due to Hersch and Weinberger \cite{hersch,Weinberger}, who proved that under suitable perimeter constraints the annulus maximizes $\lambda^{DD}$. Mixed boundary conditions were subsequently treated in \cite{herschDN,Weinberger}, and other two-dimensional cases in \cite{anoop}. In higher dimensions, Dirichlet–Robin problems were studied in \cite{gavitonepiscitelli}, while the Neumann–Robin and Robin–Neumann cases were addressed in \cite{DPP,Paoli_Piscitelli_Trani}. Exterior domains were considered in \cite{Krejloto1}, and Kesavan’s result \cite{kesavan} is recovered as a particular case of our theorem.

More recently, stability issues in multiply connected domains have been investigated in several directions. Quantitative stability for the first Robin–Neumann eigenvalue is proved in \cite{CPP}. An isoperimetric inequality for the first Steklov–Dirichlet eigenvalue in convex domains with a spherical hole is obtained in \cite{GPPS}. Torsional rigidity and its stability in multiply connected domains are studied in \cite{AB,B}. Steklov–Robin problems in annular domains are analyzed in \cite{GS2}, and local stability for the first Steklov eigenvalue with a spherical obstacle is established in \cite{PPS}.

\medskip

The paper is organized as follows. Section~\ref{Sezione3} is devoted to the construction of the effectless cut. In Section~\ref{Sezione4} we present the applications and the proof of Theorem~\ref{thm:main}. Finally, in Appendix~\ref{app}, we recall the definition and main properties of quermassintegrals and discuss some tools from shape optimization.

\section{Effectless cut in higher dimension}\label{Sezione3}
Let $\Omega \in \Simm$, that is, $\Omega$ belongs to the class of axisymmetric
doubly connected domains introduced in Definition~\ref{assisimm}. Let
$u \in C^{1}(\overline{\Omega})$ be a positive eigenfunction associated with
the first eigenvalue of problem~\eqref{probprinc}, corresponding to parameters
$\beta_{\mathrm{in}}, \beta_{\mathrm{out}}$ satisfying $(\beta_{\mathrm{in}}, \beta_{\mathrm{out}}) \in \betaposneg.$

We remark that the planar case was established by Weinberger~\cite{Weinberger} and later revisited in~\cite{anoop}.

\begin{oss}\label{hopf}
Thanks to the boundary conditions in~\eqref{probprinc} and the regularity of $u$,
Hopf's lemma yields the following sign properties for the normal derivative:
\begin{itemize}
    \item if $(\beta_{\mathrm{in}}, \beta_{\mathrm{out}}) \in (0,+\infty]^2$, then
    \[
    \frac{\partial u}{\partial \nu}(x) < 0
    \qquad \text{on } \partial \Omega;
    \]
    \item if $(\beta_{\mathrm{in}}, \beta_{\mathrm{out}}) \in (-\infty,0)^2$, then
    \[
    \frac{\partial u}{\partial \nu}(x) > 0
    \qquad \text{on } \partial \Omega.
    \]
\end{itemize}
\end{oss}

\subsection{Critical points}
To prove the main result, the classification of the critical points of $u$ is crucial. To this end, the assumption of axisymmetry of $\Omega$ plays a fundamental role. We will exploit the set of critical points of $u \in C^{1}(\overline \Omega)$. 

\begin{lemma}
\label{codimentions}
Let $\Omega \in \Simm$ and let $u$ be a solution to \eqref{probprinc}, with $(\beta_{\mathrm{in}}, \beta_{\mathrm{out}}) \in \betaposneg$. Then the set of critical points of $u$ consists of one of the following:
\begin{enumerate}
    \item finitely many isolated points on the axis of symmetry;
    \item finitely many isolated analytic varieties of codimension $2$;
    \item a connected analytic variety of codimension $1$, which is the boundary of an open set containing $\Omega_{in}$;
\end{enumerate}
or a combination of the above cases.
In the case of a connected analytic variety of codimension $1$, the function $u$ is constant along such a variety.
\end{lemma}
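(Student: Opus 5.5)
The plan is to exploit the axisymmetry to reduce the problem to two dimensions, and then use analyticity to classify the zero set of the gradient.

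\textbf{Step 1: reduction to a planar problem.} Since the first eigenvalue of \eqref{probprinc} is simple, $u$ is invariant under all rotations fixing the common axis; otherwise a rotated copy would be a second independent positive eigenfunction. Choose coordinates $x=(x_1,x')$ with the axis along $x_1$, and write $u(x)=v(x_1,r)$ with $r=|x'|$. Then $v$ is defined on the meridian section $D=\{(x_1,r): r\ge 0,\ (x_1,r,0,\dots,0)\in\Omega\}$, which is a two-dimensional domain, symmetric under $r\mapsto -r$. On $D$, $v$ solves the elliptic equation
\[
v_{11}+v_{rr}+\tfrac{n-2}{r}v_r+\lambda v=0,
\]
and $v$ is real analytic in the interior of $\Omega$ as a function of $(x_1,r)$, even in $r$. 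A point $x$ is critical for $u$ if and only if $\nabla v(x_1,r)=0$. Off the axis, $v_r=0$ is required; on the axis ($r=0$), $v_r=0$ holds automatically by evenness, so only $v_1=0$ is needed.

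\textbf{Step 2: local structure of the critical set.} By Remark~\ref{hopf}, $\nabla u\neq 0$ on $\partial\Omega$. Hence the critical set $K$ is a compact subset of the open set $\Omega$, and it is analytic. Its image $\tilde K\subset D$ is the zero set of the analytic map $(v_1,v_r)$. By the structure theorem for real analytic sets in dimension two, $\tilde K$ is a finite union of isolated points and analytic arcs; finiteness follows from compactness. The preimages are as follows:
\begin{itemize}
\item isolated points of $\tilde K$ on $r=0$ give isolated critical points on the axis, which is case 1;
\item isolated points with $r>0$ give $(n-2)$-spheres, i.e.\ analytic varieties of codimension $2$, which is case 2;
\item arcs in $\tilde K$ give codimension-$1$ varieties.
\end{itemize}
I must also rule out $\tilde K$ containing an open set. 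If $\nabla v=0$ on an open set, then $v$ is constant there, so $\lambda v=0$; with $\lambda\neq0$ this contradicts positivity, and unique continuation then extends the contradiction to all of $\Omega$. The case $\lambda=0$ cannot occur for nontrivial $\beta$ of a fixed sign.

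\textbf{Step 3: a codimension-$1$ component encloses $\Omega_{in}$, and $u$ is constant on it.} On any connected critical set, $u$ is constant: for two points joined by an analytic arc $\gamma$ in $K$, $\frac{d}{dt}u(\gamma(t))=\nabla u\cdot\gamma'=0$. It remains to show that an arc $\Gamma\subset\tilde K$, with rotated hypersurface $\Sigma$, cannot have endpoints and must close up so as to separate $\partial\Omega_{in}$ from $\partial\Omega_{out}$. Suppose $\Sigma$ bounds a region $U\subset\subset\Omega$, or together with the axis cuts off such a region. On $\partial U$ we have $u=c$ and $\partial_\nu u=0$, while $-\Delta u=\lambda u$ in $U$. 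If $U$ does not contain $\Omega_{in}$, then $U\subset\Omega$ is an open set with a Dirichlet-type condition. Then $w=u-c$ satisfies $-\Delta w=\lambda w+\lambda c$ with Cauchy data zero on $\partial U$. Integrating, $0=\int_{\partial U}\partial_\nu u=-\lambda\int_U u\neq0$, a contradiction, since $u>0$ and $\lambda\ne 0$. So any closed codimension-$1$ component must enclose $\Omega_{in}$.

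For an arc with a free endpoint in the interior off the axis, I would use the local structure of $\nabla v$ near a degenerate zero. Zero sets of analytic functions in the plane near a point consist of finitely many branches. The Cauchy–Kovalevskaya uniqueness argument for $w=u-c$, which has vanishing Cauchy data on a real analytic hypersurface piece, shows that $w$ is determined near that piece. I would try to show that this forces continuation of the critical hypersurface, so the arc extends until it closes or hits the axis perpendicularly (the latter by evenness).

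\textbf{Main obstacle.} I expect Step 3 to be the main difficulty: excluding critical arcs that end in the interior, or that meet the axis while bounding a region not containing $\Omega_{in}$. This needs the flux identity above applied to the region cut out, combined with the connectedness argument showing that the codimension-$1$ part is a single component. Two distinct enclosing components would bound a shell region $U$ on whose boundary $\partial_\nu u=0$, and the same flux identity $\int_U u=0$ rules this out.
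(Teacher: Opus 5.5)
Your Steps 1--2, the constancy of $u$ along critical arcs, and the flux identity in Step 3 are all sound. The flux identity is the same computation as in Lemma~\ref{pgin=pgout}, and your nested-shell argument for uniqueness is also correct. Together these already say more than the paper's own proof, which simply asserts that $C_1$ bounds an open set containing $\Omega_{in}$ ``by the regularity of $u$''.

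The gap is the step you flag yourself. The flux argument needs the one-dimensional part of $\tilde K$ to consist of closed \emph{embedded} curves, with no free endpoints, no branch or crossing points, and no segments of the axis. Only then are their rotations closed hypersurfaces bounding regions. The Cauchy--Kovalevskaya idea does not deliver this. Vanishing Cauchy data of $w=u-c$ on the piece of $\Sigma$ ending at a putative endpoint determine $w$ only near that piece. To push the vanishing past the endpoint you would already need $\Sigma$ to continue analytically through it, which is precisely what is in question.

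The missing ingredient is that the equation fixes the trace of the Hessian at every critical point.
Off the axis, $v_r(p)=0$ gives $v_{11}(p)+v_{rr}(p)=-\lambda u(p)\neq 0$. On the axis, evenness gives $v_{1r}(p)=0$ and $v_{11}(p)+(n-1)v_{rr}(p)=-\lambda u(p)\neq 0$.

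Now suppose $p$ is not isolated in $\tilde K$. Pick $q_k\in\tilde K$ with $q_k\to p$ and $(q_k-p)/|q_k-p|\to\tau$. Then $D^2v(p)\tau=0$, so $D^2v(p)$ has rank exactly one; on the axis this forces $\tau\in\{e_1,e_r\}$. For $\nu\perp\tau$, the function $\partial_\nu v$ has nonzero gradient at $p$. Hence near $p$ the set $\tilde K$ lies on the analytic curve $\sigma=\{\partial_\nu v=0\}$. Moreover $\partial_\tau v|_\sigma$ is a one-variable analytic function whose zeros accumulate at $p$, so it vanishes identically, and $\tilde K=\sigma$ near $p$.

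Consequently the non-isolated part of $\tilde K$ is a compact one-dimensional manifold, i.e.\ finitely many disjoint embedded closed curves. These meet the axis only perpendicularly ($\tau=e_r$). The case $\tau=e_1$ is excluded: then $\sigma$ is the axis, and $x_1\mapsto v_1(x_1,0)$ would vanish on a whole axis interval ending on $\partial\Omega$, contradicting Remark~\ref{hopf}. This argument also makes the appeal to the general structure theorem for analytic sets unnecessary. Only after this step does your flux argument apply and yield a single codimension-$1$ component enclosing $\Omega_{in}$.
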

\begin{proof}
Since $u$ is analytic in $\Omega$, it solves
\[
- \Delta u = \lambda^{RR}(\Omega)\, u \quad \text{in } \Omega,
\]
and, being rotationally symmetric, any isolated critical point must lie on the axis of symmetry.

Any other critical point $x$, by the rotational symmetry of $u$, must belong to an isolated critical set of codimension $2$ or $1$. If the first case does not occur, then in the plane through $x$ and the axis of symmetry one can find a sequence of critical points accumulating at $x$, which necessarily belong to a codimension--$1$ variety, denoted by $C_1$.

Since $u$ has no critical points on $\partial \Omega$, the critical set $C$ is contained in $\Omega$. By the regularity of $u$, $C_1$ is the boundary of an open set containing $\Omega_{\mathrm{in}}$.

By connectedness and the Morse--Sard theorem, we conclude that $u$ is constant on $C$; in particular, $C_1$ is a $C^\infty$ manifold.

\end{proof}

Denote by $K$ the set of critical points of $u$, Lemma~\ref{codimentions} can then be stated as
\[
K = C_1 \cup \left( \bigcup_{i=1}^k C_{2,i} \right) \cup \left( \bigcup_{j=1}^h C_{0,j} \right),
\]
where $C_1$ is a variety of codimension $1$, the sets $C_{2,i}$ are isolated varieties of codimension $2$, and the sets $C_{0,j}$ are isolated points located on the axis of symmetry.
Moreover, the lemma ensures that $C_1$ is of class $C^\infty$.

\subsection{Proof of the main results}
The proof of Theorem \ref{taglio} is not immediate. In order to enhance clarity, we proceed by decomposing the argument into a sequence of intermediate lemmas, each addressing a specific component of the overall result.

To construct the effectless cut, for each $x \in \Omega$ we consider the \emph{flow line} of $u$, defined as the solution of
\begin{equation}
    \label{flow}
    \begin{cases}
        \dot z_x(t) = - \nabla u(z_x(t)), & t \in I, \\
        z_x(0) = x,
    \end{cases}
\end{equation}
where $I$ is the maximal interval such that the solution remains in $\overline{\Omega}$.
Due to the regularity of $u$, the flow $z_x(t)$ is unique and analytic in $\Omega$ for every starting point $x \in \Omega$.

We now introduce the sets
\begin{equation}
\label{gin}
    G_{in} = \{ x \in \Omega \mid \exists\, t_x \text{ such that } z_x(t_x) \in \partial \Omega_{in} \},
\end{equation}
and
\begin{equation}
\label{gout}
    G_{out} = \{ x \in \Omega \mid \exists\, t_x \text{ such that } z_x(t_x) \in \partial \Omega_{out} \}.
\end{equation}

As a first step, we establish the following properties.

\begin{lemma}\label{G_aperti}
Let $G_{in}$ and $G_{out}$ be defined as in \eqref{gin} and \eqref{gout}. Then they are open, disjoint, and connected sets.
Moreover,
\[
\partial \Omega_{in} \subset G_{in}
\qquad \text{and} \qquad
\partial \Omega_{out} \subset G_{out}.
\]
\end{lemma}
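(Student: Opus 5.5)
The plan is to prove the inclusions first, then openness, then disjointness, and connectedness last, using the sign of $\partial u/\partial\nu$ from Remark~\ref{hopf} and the continuous dependence of the flow \eqref{flow} on initial data.

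\emph{Inclusions.} Treat the case $(\beta_{in},\beta_{out})\in(0,+\infty]^2$; in the other case $\nabla u$ points outward, and the same argument works after swapping the roles of the forward and backward flows. Since $u\in C^1(\overline\Omega)$ and $\partial u/\partial\nu<0$ on $\partial\Omega$, the field $-\nabla u$ is transversal to $\partial\Omega$ and points out of $\Omega$. By compactness of $\partial\Omega$ and continuity of $\nabla u$, there is a collar neighbourhood $U_{in}$ of $\partial\Omega_{in}$ in $\overline\Omega$ on which $-\nabla u\cdot\nu\ge c>0$, with $\nu$ extended as the outward normal along the $C^{1,1}$ tubular coordinates. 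Every point of $U_{in}$ therefore reaches $\partial\Omega_{in}$ in finite time, so $U_{in}\setminus\partial\Omega_{in}\subset G_{in}$; the boundary points are included by convention, taking $t_x=0$. The same argument near $\partial\Omega_{out}$ gives $\partial\Omega_{out}\subset G_{out}$.

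\emph{Openness.} Let $x\in G_{in}$ with $z_x(t_x)\in\partial\Omega_{in}$. For some slightly smaller time $t'<t_x$, the point $z_x(t')$ lies in the interior of the collar $U_{in}$ and is non-critical. By continuous dependence on initial data over the compact interval $[0,t']$ (valid because the trajectory stays in a compact subset of $\Omega$ where $\nabla u$ is locally Lipschitz, $u$ being analytic), every $y$ near $x$ has $z_y(t')\in U_{in}$. By the collar property, $z_y$ then hits $\partial\Omega_{in}$, so a neighbourhood of $x$ lies in $G_{in}$. The argument for $G_{out}$ is identical.

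\emph{Disjointness.} Since $u$ strictly decreases along non-stationary flow lines, and a trajectory that hits $\partial\Omega$ leaves $\overline\Omega$ immediately by transversality, the maximal interval $I$ ends at the first boundary hitting time. A point in both sets would produce a single trajectory ending at two different boundary components, which is impossible by uniqueness.

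\emph{Connectedness (the main obstacle).} Each point of $G_{in}$ is joined by its own flow line, which lies in $G_{in}$ because flow lines are invariant, to a point of the collar $U_{in}$. The collar is connected because $\partial\Omega_{in}$ is connected: it is the boundary of a bounded axisymmetric $C^{1,1}$ domain. I expect it to be hard to ensure that $\partial\Omega_{in}$ is connected and that the collar lies entirely inside $G_{in}$ rather than merely near it. If $\partial\Omega_{in}$ could be disconnected, I would instead invoke the doubly connected hypothesis implicit in the class $\Simm$, together with the convexity assumed in the introduction. Should that fail, I would use the axisymmetry: restrict to a half-plane through the axis, where flow lines are planar curves and connectedness of $G_{in}$ reduces to that of the two-dimensional profile. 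This is the setting in which Weinberger's planar argument applies.
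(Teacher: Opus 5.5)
Your proposal is correct and follows essentially the same route as the paper. It obtains a collar of $\partial\Omega_{in}$ inside $G_{in}$ from the Hopf sign of $\partial u/\partial\nu$, and openness from continuous dependence on initial data (the paper makes this explicit with a Gronwall estimate); disjointness comes from the fact that, by that sign, a flow line meets $\partial\Omega$ at most once, and connectedness from flowing every point of $G_{in}$ into the connected set $\partial\Omega_{in}$. The one thing to fix is your justification of that last connectedness: it does not follow from axisymmetry and $C^{1,1}$ regularity (a spherical shell is a bounded axisymmetric $C^{1,1}$ domain with disconnected boundary), but from the doubly connected/convexity hypothesis, which the paper simply takes for granted, so your second fallback is the right one, whereas the planar-profile reduction cannot help, since a disconnected $\partial\Omega_{in}$ would genuinely split $G_{in}$ into disjoint open basins.
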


\begin{proof}
We prove the statement for $G_{in}$; the proof for $G_{out}$ follows by analogous arguments.

Since $u \in C^{1}(\overline \Omega)$ and $\partial u / \partial \nu \neq 0$ on $\partial \Omega_{in}$, there exists $\overline{\delta} > 0$ such that
\[
(\partial \Omega_{in})_{\overline{\delta}} \subset G_{in}.
\]

Let $x \in G_{in}$. Then there exists $t_x$ such that
\[
z_x(t_x) = \bar{x} \in \partial \Omega_{in}.
\]
Define
\[
\gamma := \{ z_x(t) \mid t \in [0,t_x] \},
\qquad
m := \inf_{z \in \gamma} |\nabla u(z)| > 0.
\]

Let $y \in B_x(a\delta)$ for some $0 < a < 1$, to be fixed later, and choose
\[
\delta < \min \{ m, \overline{\delta}/8 \}.
\]
Our goal is to show that $y \in G_{in}$, namely that there exists $\tilde{t}$ such that
\[
z_y(\tilde {t}) \in (\partial \Omega_{in})_{\overline{\delta}}.
\]

Let $\bar{t}$ be such that
\[
z_x(\bar{t}) \in (\partial \Omega_{in})_{\overline{\delta}/2}
\setminus (\partial \Omega_{in})_{\overline{\delta}/4}.
\]
Since $u$ is analytic on compact subsets of $\Omega$, there exists $L > 0$ such that
\[
|\nabla u(z) - \nabla u(y)| \le L |z-y|
\qquad \forall z,y \in \Omega_{\overline{\delta}/8}= \left\{x \in \Omega : \, d (x, \partial \Omega )> \frac{\overline \delta
}{8}\right\}.
\]

We compute
\begin{align*}
|z_x(t) - z_y(t)|
&= \left| x - y - \int_0^t \bigl( \nabla u(z_x(s)) - \nabla u(z_y(s)) \bigr)\, ds \right| \\
&\le |x-y| + \int_0^t |\nabla u(z_x(s)) - \nabla u(z_y(s))|\, ds \\
&\le a\delta + L \int_0^t |z_x(s) - z_y(s)|\, ds.
\end{align*}
By Gronwall's lemma, it follows that
\[
|z_x(t) - z_y(t)| \le a \delta e^{Lt}
\qquad \forall t \in [0,\bar{t}].
\]

Choosing $a = e^{-L\bar{t}}$, we obtain
\[
|z_x(\bar{t}) - z_y(\bar{t})| < \delta,
\]
and therefore
\[
z_y(\bar{t}) \in (\partial \Omega_{in})_{\overline{\delta}},
\]
which implies $y \in G_{in}$.

Hence, we have proved that $G_{in}$ contains $\partial \Omega_{in}$ and is open. Since every point $x \in G_{in}$ can be connected to $\partial \Omega_{in}$ (which is connected), it follows that $G_{in}$ is also connected. 

Moreover, we emphasize that no flow line can connect both $\partial \Omega_{in}$ and $\partial \Omega_{out}$; otherwise, one would obtain a path from $\partial \Omega_{in}$ to $\partial \Omega_{out}$ along which the gradient does not change sign, contradicting Remark~\ref{hopf}. This ensures that $G_{in}$ and $G_{out}$ are disjoint.
\end{proof}

\begin{corollario}
Let $G_{in}$ and $G_{out}$ be defined as in \eqref{gin} and \eqref{gout}. Then
\[
\overline{G}_{in} \cap G_{out}
=
\overline{G}_{out} \cap G_{in}
=
\emptyset.
\]
\end{corollario}

\begin{proof}
Assume by contradiction that $\overline{G}_{in} \cap G_{out} \neq \emptyset$.
Then there exists $z \in \partial G_{in} \cap G_{out}$.
Since $G_{out}$ is open, this implies the existence of a point
\[
\tilde z \in G_{in} \cap G_{out},
\]
contradicting Lemma~\ref{G_aperti}.
\end{proof}

\begin{lemma}\label{G_aperti2}
Let $x_0 \in \partial G_{in} \cap \Omega$. Then
\[
z_{x_0}(t) \in \partial G_{in} \cap \Omega
\qquad \forall t \in \mathbb{R} \cup \{\pm \infty\}.
\]
\end{lemma}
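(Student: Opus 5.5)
The plan is to prove that both $G_{in}$ and its complement $\Omega\setminus\overline{G}_{in}$ are invariant under the flow \eqref{flow}; this forces the boundary $\partial G_{in}\cap\Omega$ to be invariant as well. Here $z_{x_0}(t)$ for $t=\pm\infty$ is understood as the limit points (the $\omega$- and $\alpha$-limit sets) of the flow line.

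\textbf{Invariance of $G_{in}$.} The set $G_{in}$ is invariant under the flow wherever the flow is defined inside $\Omega$. If $x\in G_{in}$ and $y=z_x(s)$ with $y\in\Omega$, then by uniqueness of solutions of \eqref{flow} we have $z_y(t)=z_x(t+s)$. Hence $z_y(t_x-s)=z_x(t_x)\in\partial\Omega_{in}$, so $y\in G_{in}$. Conversely, if $z_x(s)\in G_{in}$, then $x\in G_{in}$. Both directions use only uniqueness of flow lines.

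\textbf{Invariance of the complement.} Since $G_{in}$ is open (Lemma~\ref{G_aperti}), $\Omega\setminus G_{in}$ is also invariant. Suppose some $y=z_{x_0}(s)$ lay in $G_{in}$. Then $x_0$ itself would lie in $G_{in}$, which is open and therefore disjoint from $\partial G_{in}$. This contradicts $x_0\in\partial G_{in}$.

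\textbf{Boundary points stay on the boundary.} Take $x_k\in G_{in}$ with $x_k\to x_0$. By continuous dependence on initial data (the Gronwall estimate used in the proof of Lemma~\ref{G_aperti}, applied on a compact subset of $\Omega$ where $\nabla u$ is Lipschitz), for each fixed finite $t$ in the maximal interval we get $z_{x_k}(t)\to z_{x_0}(t)$. Since $z_{x_k}(t)\in G_{in}$, it follows that $z_{x_0}(t)\in\overline{G}_{in}$. Combined with the previous step, $z_{x_0}(t)\in\partial G_{in}$.

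\textbf{The flow line never leaves $\Omega$.} It remains to check that $z_{x_0}(t)\in\Omega$ for all $t$.
\begin{enumerate}
\item The flow line cannot reach $\partial\Omega_{in}$, since then $x_0\in G_{in}$ by definition.
\item It cannot reach $\partial\Omega_{out}$, since then $x_0\in G_{out}$, which is open and disjoint from $\overline{G}_{in}$ by the Corollary. This contradicts $x_0\in\overline{G}_{in}$.
\item It cannot accumulate on $\partial\Omega$ either. By Lemma~\ref{G_aperti}, neighbourhoods $(\partial\Omega_{in})_{\bar\delta}\subset G_{in}$ and the analogous neighbourhood of $\partial\Omega_{out}$ lie in $G_{in}$ and $G_{out}$ respectively. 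A flow line entering $(\partial\Omega_{in})_{\bar\delta}$ would put $x_0$ in $G_{in}$, and one entering the outer neighbourhood would put $x_0$ in $G_{out}$.
\end{enumerate}
Hence the trajectory stays in the compact set $\overline{\Omega}\setminus\bigl((\partial\Omega_{in})_{\bar\delta}\cup(\partial\Omega_{out})_{\bar\delta}\bigr)$. Because $\nabla u$ is bounded there, the solution exists for all $t\in\mathbb{R}$.

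\textbf{The limits $t\to\pm\infty$.} Along the flow, $\frac{d}{dt}u(z(t))=-|\nabla u|^2$, so $u$ is monotone and bounded on the trajectory. Therefore $\int_{\mathbb{R}}|\nabla u(z(t))|^2\,dt<\infty$, and every limit point is a critical point of $u$, i.e.\ lies in $K$. These limit points lie in the compact set above, hence in $\Omega$. They also lie in $\partial G_{in}$, since $\partial G_{in}$ is closed in $\Omega$ and contains the whole trajectory.

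\textbf{Main obstacle.} I expect the delicate part to be the interpretation of $t=\pm\infty$. The limit may be a set of critical points rather than a single point, for instance a piece of $C_1$ or of some $C_{2,i}$. If convergence to a single point is required, I would invoke the Łojasiewicz gradient inequality, valid since $u$ is analytic, to obtain finite length of the trajectory and hence convergence to a single point.
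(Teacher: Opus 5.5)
Your proof is correct and follows the paper's route. The paper's two-line argument uses the same two ingredients: $x_0\notin G_{in}\cup G_{out}$, and continuous dependence on initial data, which together force $z_{x_0}(t)\in\overline{G}_{in}\setminus G_{in}$. You make explicit what the paper leaves implicit: flow-invariance of $G_{in}$ and $G_{out}$ by uniqueness, global existence because the orbit avoids the collars $(\partial\Omega_{in})_{\bar\delta}$ and $(\partial\Omega_{out})_{\bar\delta}$, and the meaning of $t=\pm\infty$ via limit sets (with {\L}ojasiewicz if single-point limits are wanted).
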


\begin{proof}
Since $G_{in}$ and $G_{out}$ are open and disjoint, if
$x_0 \in \partial G_{in} \cap \Omega$, then
$x_0 \notin G_{in} \cup G_{out}$.
By continuous dependence on the initial data, for every $\varepsilon > 0$,
\[
B_{x_0}(\varepsilon) \cap G_{in} \neq \emptyset
\quad \Longrightarrow \quad B_{z_{x_0}(t)}(\varepsilon) 
 \cap G_{in} \neq \emptyset
\quad \forall t \in \mathbb{R},
\]
which proves the claim.
\end{proof}

\begin{lemma}\label{ginrot}
Let $\Omega \in \Simm$. Then the sets $G_{in}$ and $G_{out}$ defined in
\eqref{gin} and \eqref{gout} belong to $\Simm$.
\end{lemma}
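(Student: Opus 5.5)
The approach is to show that the rotational symmetry of $\Omega$ passes to $u$, then to the gradient flow~\eqref{flow}, and finally to the sets $G_{in}$ and $G_{out}$.

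\textbf{Step 1: symmetry of $u$.} Fix the common axis of symmetry and let $R$ be any rotation of $\mathbb{R}^n$ fixing this axis. Since $\Omega_{in}$ and $\Omega_{out}$ are invariant under $R$, the function $u\circ R$ is admissible in~\eqref{ray} and has the same Rayleigh quotient, because $R$ is an isometry preserving $\Omega$, $\partial\Omega_{in}$ and $\partial\Omega_{out}$. The first eigenvalue of~\eqref{probprinc} is simple, and this holds for every $(\beta_{in},\beta_{out})\in\betaposneg$. Both $u$ and $u\circ R$ are positive, so $u\circ R = c\,u$ with $c>0$, and equality of the $L^2$ norms forces $c=1$. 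Hence $u$ is axisymmetric, and $\nabla u(Rx)=R\,\nabla u(x)$.

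\textbf{Step 2: equivariance of the flow.} For $x\in\Omega$ set $w(t)=R\,z_x(t)$. Then
\[
\dot w(t)=R\dot z_x(t)=-R\nabla u(z_x(t))=-\nabla u(Rz_x(t))=-\nabla u(w(t)),
\qquad w(0)=Rx .
\]
By uniqueness, $z_{Rx}(t)=R\,z_x(t)$ on the common maximal interval, which is the same for both curves since $R\overline\Omega=\overline\Omega$. Consequently, $z_x(t_x)\in\partial\Omega_{in}$ if and only if $z_{Rx}(t_x)\in R\,\partial\Omega_{in}=\partial\Omega_{in}$. This gives $RG_{in}=G_{in}$, and in the same way $RG_{out}=G_{out}$. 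Both sets are therefore invariant under all rotations about the axis. By Lemma~\ref{G_aperti} they are open, connected and bounded. Their axes coincide with the axis of $\Omega$, and the corresponding ``filled'' sets $G=G_{in}\cup\overline\Omega_{in}$ and $\Omega_{out}\setminus\overline{G_{out}}$ are axisymmetric as well.

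\textbf{Step 3: regularity (main obstacle).} The hard part is that Definition~\ref{assisimm} asks for $C^{1,1}$ boundaries, while the new boundary $\partial G_{in}\cap\Omega$ is made only of flow lines and critical points, by Lemma~\ref{G_aperti2} and Lemma~\ref{codimentions}. I would first show that, away from $K$, $\partial G_{in}\cap\Omega$ is a union of complete flow lines, hence locally an analytic hypersurface. Indeed, in the meridian half-plane the flow is a planar analytic flow, and the boundary between two basins is formed by separatrices. I would then use the structure of $K$ given by Lemma~\ref{codimentions}: in the meridian plane it consists of finitely many points plus possibly the curve $C_1$. This shows that the cut fails to be regular only on an $\mathcal H^{n-1}$-negligible set.

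If full $C^{1,1}$ regularity cannot be obtained at the finitely many singular orbits, I would interpret membership in $\Simm$ as covering the symmetry part, with the regularity meant in the Lipschitz/finite-perimeter sense used for $\partial_\ast G$ in the Appendix. This weaker regularity is all that Theorem~\ref{taglio} requires. In the worst case, I would approximate the cut by smooth axisymmetric sets, which is possible because all the constructions above are rotation-equivariant.
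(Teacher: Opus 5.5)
Your Steps 1--2 are essentially the paper's proof. The paper takes the axisymmetry of $u$ for granted, uses $T(\nabla u(y))=\nabla u(T(y))$ for a rotation $T$ about the axis, and concludes by uniqueness of the flow that $z_{Tx}(t)=T(z_x(t))$, so $Tx\in G_{in}$ whenever $x\in G_{in}$; your simplicity argument giving $u\circ R=u$ usefully justifies the step the paper leaves implicit. The paper stops there and never addresses the $C^{1,1}$ requirement of Definition~\ref{assisimm}, in effect reading ``belong to $\Simm$'' as axisymmetry about the common axis, so your Step~3 goes beyond what the paper proves. Note, though, that its smoothing fallback would establish nothing about the specific sets $G_{in}$, $G_{out}$ themselves.
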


\begin{proof}
Let us assume that the axis of symmetry is the $x_n$-axis. Moreover, let $x = (x',x_n) \in G_{in}$ and let $x''$ be such that $\|x''\| = \|x'\|$. We set $\bar{x} = (x'',x_n)$.  

There exists a rotation $T$ around the $x_n$-axis such that $T(x)=\bar{x}$.
Since $u$ is rotationally symmetric,
\[
T(\nabla u(y)) = \nabla u(T(y)) \qquad \forall y \in \Omega.
\]
If $z_x(t)$ reaches $\partial \Omega_{in}$ at time $t_x$, then
\[
z_{\bar{x}}(t) = T(z_x(t)) \qquad \forall t \in [0,t_x],
\]
and thus
\[
z_{\bar{x}}(t_x) \in \partial \Omega_{in},
\]
which shows that $\bar{x} \in G_{in}$.
The same argument applies to $G_{out}$.
\end{proof}

To avoid internal cracks and construct a set $G$ such that
\[
\Omega_{in} \subset\subset G \subset\subset \Omega_{out},
\]
we define
\[
G^\ast_{in} := \mathrm{Int}(\overline{G}_{in}),
\qquad
G^\ast_{out} := \mathrm{Int}(\overline{G}_{out}).
\]

\begin{lemma}\label{pgin=pgout}
Let us consider the sets $G^\ast_{in}$ and $G^\ast_{out}$, then 
\[
\partial G^\ast_{in} \cap \Omega = \partial G^\ast_{out} \cap \Omega.
\]
\end{lemma}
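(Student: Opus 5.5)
We will show the two inclusions $\partial G^\ast_{in}\cap\Omega\subseteq \partial G^\ast_{out}\cap\Omega$ and the reverse one; by symmetry of the roles of $G_{in}$ and $G_{out}$ (Lemma~\ref{G_aperti}, Lemma~\ref{G_aperti2} hold for both) it suffices to prove the first.

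\textbf{Step 1: a covering identity.} The key preliminary fact is that
\[
\Omega=\overline{G}_{in}\cup\overline{G}_{out}\ \text{ up to } \Omega\cap(\partial G_{in}\cup\partial G_{out}),
\qquad\text{more precisely}\qquad
\Omega\setminus(G_{in}\cup G_{out})\ \text{has empty interior.}
\]
To see this, take $x\in\Omega\setminus(G_{in}\cup G_{out})$. Its flow line $z_x$ never reaches $\partial\Omega$, so it is defined for all $t\in\mathbb R$, and $u(z_x(t))$ is monotone with $\frac{d}{dt}u(z_x(t))=-|\nabla u|^2$. Since $u$ is bounded, both $\omega$- and $\alpha$-limit sets of $z_x$ lie in the critical set $K$ described in Lemma~\ref{codimentions}. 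If an open ball $B\subset\Omega\setminus(G_{in}\cup G_{out})$ existed, every point of $B$ would flow forward and backward into $K$. The backward limits lie in $K$ at a level strictly above $u$ on $B$ (for points of $B$ with $\nabla u\neq0$), and the set of points whose backward flow converges to a given critical component is the unstable set of that component. For the isolated points $C_{0,j}$ and codimension-$2$ sets $C_{2,i}$, the unstable set of the analytic gradient flow has empty interior unless the component is a local maximum. Since $u>0$ is a first eigenfunction with $-\Delta u=\lambda u$ and $\lambda>0$ in the Robin-positive case, $u$ is superharmonic, which rules out interior local minima, and the analogous statement holds when $\beta<0$. For $C_1$, on which $u$ is constant, the points flowing away from it form open one-sided neighbourhoods. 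However, $C_1$ bounds a set containing $\Omega_{in}$, so these flow lines must end either on $\partial\Omega_{in}$ or on $\partial\Omega_{out}$, or accumulate on other critical components. Iterating over the finitely many components, we obtain that almost every point of $B$ reaches $\partial\Omega$, a contradiction. I expect this step to be the main obstacle: controlling the stable and unstable manifolds of the critical components in the axisymmetric analytic setting, where Łojasiewicz's inequality guarantees convergence of $z_x(t)$ to a single critical point.

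\textbf{Step 2: the inclusion.} Let $x_0\in\partial G^\ast_{in}\cap\Omega$. Every neighbourhood $B_{x_0}(\varepsilon)$ meets the complement of $\overline G_{in}$. That complement is open, so by Step~1 it cannot lie inside $\Omega\setminus(G_{in}\cup G_{out})$, and hence it meets $G_{out}$. Therefore $x_0\in\overline G_{out}$.

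It remains to see that $x_0\notin G^\ast_{out}$. By the Corollary, $\overline G_{in}\cap G_{out}=\emptyset$. Since $x_0\in\overline{G^\ast_{in}}\subseteq\overline G_{in}$, every neighbourhood of $x_0$ meets $G_{in}$, which is open and disjoint from $\overline G_{out}$ (again by the Corollary). Hence $x_0$ is not an interior point of $\overline G_{out}$, so $x_0\in\partial G^\ast_{out}$. This uses the fact that $x_0\in\overline G_{out}$ implies $x_0\in\overline{G^\ast_{out}}$, which holds because $G_{out}\subseteq G^\ast_{out}$.

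The reverse inclusion follows by exchanging the roles of $G_{in}$ and $G_{out}$. Lemma~\ref{G_aperti2} is used to check that boundary points are invariant under the flow, which is consistent with the above.
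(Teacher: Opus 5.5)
Your Step~2 is correct. Granted that $\Omega\setminus(G_{in}\cup G_{out})$ has empty interior, the Corollary together with $\overline{G^\ast_{in}}=\overline{G}_{in}$ and $G_{out}\subseteq G^\ast_{out}$ gives both inclusions. But the reduction carries no content. A nonempty open subset of $\Omega\setminus(G_{in}\cup G_{out})$ exists exactly when $\Omega\setminus(\overline{G}^\ast_{in}\cup\overline{G}^\ast_{out})\neq\emptyset$, and that set is precisely what the paper's proof by contradiction excludes. So everything rests on Step~1, and there the proposal has a genuine gap.

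\textbf{Why Step~1 fails as written.}
The reasoning via backward limits and unstable sets cannot work. For $\beta>0$ one has $\partial_\nu u<0$ on $\partial\Omega$, so $\nabla u$ points into $\Omega$. Hence the backward (uphill) orbit of \emph{every} point of $\Omega$, reached or not, stays in $\Omega$ and accumulates on $K$. The sets of points whose backward limit is a local maximum have nonempty interior. Examples are a neighbourhood of the maximum point of $u$, or a one-sided collar of $C_1$, which is always a transversal maximum ridge since $\partial^2_{\nu\nu}u=\Delta u=-\lambda u<0$ on it. Superharmonicity excludes minima and says nothing about these sets.

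The information would have to come from forward limits, and there ``not a local minimum'' is not enough at degenerate analytic critical points. For $x^2+y^3$ the origin attracts the whole half-plane $\{y>0\}$. To make the forward argument work you would need:
\begin{itemize}
\item the quantitative fact $\operatorname{tr}D^2u(p)=-\lambda u(p)<0$;
\item a center--stable manifold theorem;
\item a measure-zero conclusion rather than empty interior, so that unions can be taken;
\item a treatment of the continua $C_{2,i}$, for instance by passing to the meridian half-plane.
\end{itemize}
``Iterating over the finitely many components'' supplies none of this, so the step you flagged as the main obstacle remains unproved.

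\textbf{The missing idea.}
The paper uses a flux argument that avoids the dynamics altogether. Let $\Omega_0$ be a connected component of $\Omega\setminus(\overline{G}^\ast_{in}\cup\overline{G}^\ast_{out})$.
\begin{itemize}
\item Its boundary inside $\Omega$ consists of points of $\partial G_{in}\cup\partial G_{out}$.
\item By Lemma~\ref{G_aperti2}, flow lines through such points stay in that boundary. So $\nabla u$ is tangent to it or vanishes, and $\partial u/\partial\nu=0$ $\mathcal{H}^{n-1}$-a.e.\ on $\partial_\ast\Omega_0$.
\item Integrating $-\Delta u=\lambda u$, with $\lambda=\lambda^{RR}(\Omega)$, over $\Omega_0$ gives $\lambda\int_{\Omega_0}u\,dx=0$. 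This is impossible since $u>0$ and $\lambda\neq0$.
\end{itemize}
Only the strict sign of $\Delta u$ enters, which is why hypothesis (c) suffices in Proposition~\ref{taglio2}. No classification of critical points, {\L}ojasiewicz inequality or invariant manifolds are needed.

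If you want to avoid the finite-perimeter issue, the same idea can be phrased through Liouville's formula. The open set $U=\mathrm{Int}(\Omega\setminus(G_{in}\cup G_{out}))$ is a union of complete orbits, so the flow $\phi_t$ of $-\nabla u$ maps it into itself. Its Jacobian is $\det D\phi_t(x)=\exp\bigl(\lambda\int_0^t u(\phi_s(x))\,ds\bigr)>1$ for $t>0$ when $\lambda>0$ (reverse time when $\lambda<0$). Then $|\phi_t(U)|>|U|$ whenever $|U|>0$, contradicting $\phi_t(U)\subseteq U$.
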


\begin{proof}
Assume by contradiction that the statement is false.
Then the set
\[
\Omega \setminus (\overline{G}^\ast_{in} \cup \overline{G}^\ast_{out})
\]
is nonempty.
Let $\Omega_0$ be one of its connected components.
Since $G_{in}$ and $G_{out}$ have finite perimeter, so does $\Omega_0$.
Integrating
\[
- \Delta u = \lambda^{RR}(\Omega) u
\]
over $\Omega_0$ and applying the divergence theorem, we obtain
\[
\lambda^{RR}(\Omega) \int_{\Omega_0} u \, dx
=
\int_{\partial_\ast \Omega_0} \frac{\partial u}{\partial \nu}
\, d\mathcal{H}^{n-1}
= 0,
\]
which contradicts the positivity of $u$ and the fact that $\lambda^{RR}(\Omega) \neq 0$.
\end{proof}

We can now prove the existence of the {\it effectless cut} $G$.
\begin{proof}[Proof of Theorem \ref{taglio}]

We define
\[
G := G^\ast_{in} \cup \overline{\Omega}_{in},
\]
and, by Lemma~\ref{pgin=pgout}, we have
\[
G = \Omega_{out} \setminus \overline{G}^\ast_{out} \quad \text{ and } \quad \Omega_{in} \subset \subset G \subset \subset \Omega_{out}.
\]

Since $G$ is a set of finite perimeter, the outer unit normal $\nu_G$ exists at $\mathcal{H}^{n-1}$-almost every point of $\partial_\ast G$. Moreover, by Lemma \ref{G_aperti2}, for every $x \in \partial_\ast G$ we have either $\nabla u(x)=0$ or $\nabla u(x)$ is tangent to $\partial_\ast G$. In particular, in both cases the gradient is orthogonal to the normal, and therefore
\[
\frac{\partial u}{\partial \nu_G}(x)
= \nabla u(x)\cdot \nu_G(x)
= 0
\qquad \text{for } \mathcal{H}^{n-1}\text{-a.e. } x \in \partial_\ast G.
\]
\end{proof}
In the same spirit as \cite{Weinberger}, we observe that Theorem~\ref{taglio} can in fact be stated for more general functions $u$ satisfying suitable properties.
\begin{prop}\label{taglio2}

Let $\Omega = \Omega_{out} \setminus \overline{\Omega}_{in} \in \Simm$. Let $u \in C^{1}(\overline{\Omega})$ be a  function satisfying
\begin{enumerate}
    \item[a.] $u>0$ in $\Omega$ and $\frac{\partial u}{\partial \nu}$ has constant sign on $\partial \Omega$;
    \item[b.] $u$ in analytic in $\Omega$; 
    \item[c.]  $\Delta u$ has constant sign in $\Omega$;
    \item[d.] $u$ is a axisymmetric function with respect to the axis of symmetry of $\Omega$. 
\end{enumerate}
 Then there exists an open set $G$ such that
\[
\Omega_{\mathrm{in}} \subset\subset G \subset\subset \Omega_{\mathrm{out}},
\]
and
\[
\frac{\partial u}{\partial \nu}(x) = 0
\qquad \mathcal{H}^{n-1}\text{-a.e.\ on } \partial_\ast G.
\]
\end{prop}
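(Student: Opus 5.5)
The plan is to rerun the proof of Theorem~\ref{taglio} line by line and check that each step uses only hypotheses a.--d. and never the equation $-\Delta u=\lambda^{RR}(\Omega)u$ itself. Four places invoked the eigenvalue problem: the sign information of Remark~\ref{hopf}, the analyticity used in Lemma~\ref{codimentions} and for the flow \eqref{flow}, the rotational invariance used in Lemma~\ref{ginrot}, and the divergence-theorem argument of Lemma~\ref{pgin=pgout}. These are replaced, respectively, by a., b., d., and c.

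First, hypothesis a. gives $\partial u/\partial\nu\neq0$ on $\partial\Omega$ with a fixed sign. This is exactly what Lemma~\ref{G_aperti} needs in two places. It guarantees that a neighbourhood $(\partial\Omega_{in})_{\bar\delta}$ lies in $G_{in}$, since flow lines starting near $\partial\Omega_{in}$ reach it transversally. It also shows that no flow line of $-\nabla u$ joins $\partial\Omega_{in}$ to $\partial\Omega_{out}$: along such a line $u$ is strictly monotone, which is incompatible with $-\nabla u$ pointing outward (or inward) on both boundary components simultaneously.

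Next come the structural steps, which transfer with little change.
\begin{itemize}
\item Hypothesis b. gives local Lipschitz continuity of $\nabla u$ on compact subsets of $\Omega$. Hence the flow \eqref{flow} is well posed and the Gronwall argument goes through, so $G_{in}$ and $G_{out}$ are open, disjoint and connected.
\item Lemma~\ref{G_aperti2} uses only continuous dependence on the data.
\item Hypothesis d. gives $T\nabla u=\nabla u\circ T$ for rotations $T$ about the axis, so Lemma~\ref{ginrot} holds verbatim.
\item The structure of the critical set in Lemma~\ref{codimentions} used only analyticity, axisymmetry and the absence of critical points on $\partial\Omega$. These follow from b., d. and a., so the decomposition of $K$ is unchanged.
\item The finite-perimeter property of $G_{in}$ and $G_{out}$ is likewise unchanged.
\end{itemize}

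The key step, and the only one requiring thought, is Lemma~\ref{pgin=pgout}. Suppose some component $\Omega_0$ of $\Omega\setminus(\overline G^\ast_{in}\cup\overline G^\ast_{out})$ exists. Its essential boundary lies in $\partial G_{in}\cup\partial G_{out}$, and by Lemma~\ref{G_aperti2} the gradient is tangent to it or vanishes there. Applying the divergence theorem for sets of finite perimeter gives
\[
\int_{\Omega_0}\Delta u\,dx=\int_{\partial_\ast\Omega_0}\frac{\partial u}{\partial\nu}\,d\mathcal H^{n-1}=0.
\]
To reach a contradiction I would use c. in the strict form $\Delta u\neq0$ a.e. Note that analyticity does not rule out $\Delta u\equiv 0$ on an open set, so c. must be read as excluding this. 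With that reading, the integral of a function of constant nonzero sign over the nonempty open set $\Omega_0$ cannot vanish, contradicting the display.

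Making this divergence-theorem step rigorous is the main obstacle. Since $\Omega_0$ may touch $\partial\Omega$, I would first check that $\overline\Omega_0\cap\partial\Omega=\emptyset$. Points near $\partial\Omega$ lie in $G_{in}\cup G_{out}$ by the first step, so this holds. The divergence theorem is then applied on a compact subset of $\Omega$, where $u$ is smooth.

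With Lemma~\ref{pgin=pgout} in hand, set $G:=G^\ast_{in}\cup\overline\Omega_{in}$ exactly as in the proof of Theorem~\ref{taglio}. The tangency of $\nabla u$ on $\partial_\ast G$, again from Lemma~\ref{G_aperti2}, then gives $\partial u/\partial\nu=0$ $\mathcal H^{n-1}$-a.e. on $\partial_\ast G$.
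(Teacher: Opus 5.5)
Your proposal matches the paper's proof: rerun the argument for Theorem~\ref{taglio} with a.--d. replacing the eigenvalue equation, the only substantive change being that the divergence-theorem contradiction in Lemma~\ref{pgin=pgout} now comes from c. (the paper also lists c. among the hypotheses used in Lemma~\ref{codimentions}). One small correction: your strict reading of c. is automatic and need not be assumed, because $\Omega$ is connected and $\Delta u$ is analytic, so $\Delta u\equiv 0$ on an open subset would force $\Delta u\equiv 0$ in all of $\Omega$, and a. rules this out since $\int_{\partial\Omega}\partial u/\partial\nu\,d\mathcal H^{n-1}\neq 0$.
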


\begin{proof}
The proof follows the same lines as that of Theorem~\ref{taglio}, with the main differences arising at two points where we crucially use the fact that, in Theorem~\ref{taglio}, $u$ is a solution to a PDE. In particular:
\begin{itemize}
    \item in the proof of Lemma~\ref{codimentions}, where assumptions $b$, $c$, and $d$ are used;
    \item in the proof of Lemma~\ref{pgin=pgout}, where it is enough to apply the divergence theorem together with property $c$.
\end{itemize}
\end{proof}\section{Applications}\label{Sezione4}
Before we move into the proof of Theorem \ref{thm:main}, we show that 
the class $\Hersch\cap\Simm$ is nonempty.

\begin{proof}[Proof of Proposition \ref{nonempclass}]
Let us consider two convex domains
$\Omega_{\mathrm{in}}$ and $\Omega_{\mathrm{out}}$, together with two balls
$B_{\mathrm{in}}$ and $B_{\mathrm{out}}$, all sharing the same axis of symmetry,
such that
\[
B_{\mathrm{in}} \subset\subset \Omega_{\mathrm{in}}
\subset\subset \Omega_{\mathrm{out}} \subset\subset B_{\mathrm{out}}.
\]

We introduce the one-parameter families of convex sets defined by
\[
\Gamma_{\mathrm{out}}(a) = a B_{\mathrm{out}} + (1-a)\,\Omega_{\mathrm{out}},
\qquad
\Gamma_{\mathrm{in}}(b) = b B_{\mathrm{in}} + (1-b)\,\Omega_{\mathrm{in}},
\]
for $a,b \in [0,1]$, where the sums are understood in the sense of Minkowski
addition. For every $a,b \in [0,1]$, the sets $\Gamma_{\mathrm{out}}(a)$ and
$\Gamma_{\mathrm{in}}(b)$ are convex and axisymmetric, with the same axis of
symmetry.

We now evaluate condition~\eqref{nunziadef} for the doubly connected domain
$\Gamma_{\mathrm{out}}(a) \setminus \overline{\Gamma_{\mathrm{in}}(b)}$ by
introducing the function
$$f(a,b) = \frac{|\Gamma_{out}(a)|}{\omega_n}- \frac{|\Gamma_{in}(b)|}{\omega_n} - \left( \frac{P(\Gamma_{out}(a))}{n\omega_n} \right)^{\frac{n}{n-1}} + \left( \frac{W_{n-1}(\Gamma_{in}(b))}{\omega_n} \right)^{n}.$$

By the Aleksandrov--Fenchel inequality, we have
\[
f(1,0) > 0 \qquad \text{and} \qquad f(0,1) < 0.
\]
Since $f$ is continuous on the compact set $[0,1]^2$, there exist
$\bar a, \bar b \in (0,1)$ such that
\[
f(\bar a, \bar b) = 0.
\]

Consequently, the domain
$\Gamma_{\mathrm{out}}(\bar a) \setminus \overline{\Gamma_{\mathrm{in}}(\bar b)}$
belongs to the class $\herintsim$ and it is neither a
concentric nor a translated spherical shell.
\end{proof}

We can now proceed with the proof of Theorem \ref{thm:main}

\begin{proof}[Proof of Theorem \ref{thm:main}]
    Let $u$ be the solution to \eqref{probprinc}. Since $\Omega \in \Simm$, by Section \ref{Sezione3}, there exists a bounded, open, and  connected set $G \supset \Omega_{in}$ with finite perimeter, such that 
    $$
    \frac{\partial u}{\partial \nu}(x)= 0 \qquad\qquad \mathcal{H}^{n-1}-a.e. \text{ in } \partial_\ast G.$$

Since $u$ solves \[
- \Delta u = \lambda^{RR}(\Omega)\, u \quad \text{in } \Omega,
\]by multipling the equation for any $\varphi \in W^{1,2}(G \cap \Omega)$ and integrating over $G \cap \Omega$, we obtain  
\begin{equation}
\label{gintom}
    \int _{G \cap \Omega} \nabla u \nabla \varphi \, dx+\beta_{in} \int_{\partial\Omega_{in}} u \varphi \, d \mathcal{H}^{n-1} = \lambda^{RR}(\Omega)\int_{G \cap\Omega} u \varphi \, dx, \qquad \forall \varphi \in W^{1,2}(G \cap\Omega).
    \end{equation}
Similarly, we get 
\begin{equation}
\label{gmenom}
    \int _{\Omega \setminus \overline G} \nabla u \nabla \varphi \, dx+\beta_{out} \int_{\partial\Omega_{out}} u \varphi \, d \mathcal{H}^{n-1} = \lambda^{RR}(\Omega)\int_{\Omega \setminus \overline G} u \varphi \, dx, \qquad \forall \varphi \in W^{1,2}(\Omega \setminus \overline G).
    \end{equation} 
Lemma \ref{autofpos} and Remark \ref{tutticasi} ensure us that 
$$
\lambda^{RN}(G\cap \Omega)=\lambda^{NR}(\Omega\setminus \overline{G})=\lambda^{RR}(\Omega).$$

Let us study the two obtained mixed eigenvalues involving the Neumann boundary condition on $\partial G$:
\begin{itemize}
    \item by Theorem \ref{DPP}, one has
    \begin{equation}
    \label{dpp}
        \lambda^{RN}(G\cap \Omega)\leq \lambda^{RN}(A_{R_1,R_3});
    \end{equation}

    \item by Theorem \ref{PPT}, one has
    \begin{equation}
        \label{ppt}
        \lambda^{NR}(\Omega\setminus \overline{G})\leq \lambda^{NR}(A_{R_4,R_2}).
    \end{equation}
\end{itemize}

The outer radius of the first annulus and the inner radius of the second one are the same, in fact, by construction the sum of the volumes of the two annuli must be the one of $\Omega$, hence
$$
\displaystyle \frac{|\Omega|}{\omega_n}=R_2^n - R_4^n +R_3^n-R_1^n,
$$
but from \eqref{nunziadef}
$$ \displaystyle \frac{|\Omega|}{\omega_n}= \displaystyle \left(\frac{P(B_{R_2})}{n\omega_ n}\right)^\frac{n}{n-1}-\displaystyle \left(\frac{W_{n-1}(B_{R_{1}})}{\omega_ n}\right)^n= R_2^n -R_1^n.$$
Hence, subtracting the previous two equations, we obtain
$$
R_3=R_4=r.
$$

We now have 
$$
\lambda^{{RR}}(\Omega) \leq \min \{\lambda^{RN}(A_{R_1,r}), \lambda^{NR}(A_{r,R_2})\},
$$
hence

\begin{equation}
    \label{maxmin}
    \lambda^{{RR}}(\Omega) \leq\max_{r \in [R_1,R_2]} \min \{\lambda^{RN}(A_{R_1,r}), \lambda^{NR}(A_{r,R_2})\}.
\end{equation}
Since, by Lemma~\ref{incrdecr}, the maximum involves the minimum of an increasing quantity and a decreasing quantity in $r$, it is achieved when
\[
\lambda^{RN}(A_{R_1,r}) = \lambda^{NR}(A_{r,R_2}).
\]

Since the two quantities are equal, we can "glue" the two eigenfunctions in $A_{R_1,r}$ and $A_{r, R_2}$, to obtain a positive eigenfunction for $\lambda^{RR}(A_{R_1,R_2})$, hence its first eigenfunction.

To conclude the proof, we analyze the equality case in \eqref{teo1.4} and establish rigidity. First, we observe that equality in \eqref{teo1.4} implies equality in \eqref{maxmin}, and consequently in \eqref{dpp} and \eqref{ppt}.

In particular, we obtain
\begin{equation}
\label{==dpp}
\lambda^{RR}(\Omega)=\lambda^{RN}(G\cap \Omega)=\lambda^{RN}(A_{R_1,r})=\lambda^{RR}(A_{R_1,R_2}),
\end{equation}
and
\begin{equation}
\label{==ppt}
\lambda^{RR}(\Omega)=\lambda^{NR}(\Omega\setminus \overline{G})=\lambda^{NR}(A_{r,R_2})=\lambda^{RR}(A_{R_1,R_2}),
\end{equation}

Hence, by Theorem \ref{DPP} we have 
$$G \cap \Omega = A_{R_1,r},$$
while 
Theorem \ref{PPT} gives
$$ \Omega \setminus\overline G = A_{r,R_2}.$$
Since $\Omega = (G \cap \Omega) \cup (\Omega \setminus\overline G)$, the thesis follows.

\end{proof}

\begin{oss}
While the axisymmetry assumption might not be essential and could potentially be removed, this does not seem to be the case for the convexity assumption.

In the planar case, the analogue of Theorem~\ref{thm:main} is stated for simply connected domains $\Omega_{\mathrm{out}}$ and $\Omega_{\mathrm{in}}$, and the simply connectedness assumption cannot be removed. Indeed, homogenization techniques, or the addition of many small balls outside the domain, can easily provide counterexamples to Theorem~\ref{thm:main} in dimension $2$ if the simply connectedness assumption is dropped, as the eigenvalue $\lambda^{RR}(\Omega)$ can be made arbitrarily large (see for instance \cite{ciormur}).

If one attempts to repeat this construction in higher dimensions, it is possible to connect the added balls to the main domain without substantially affecting the eigenvalue. As a consequence, one can construct simply connected domains $\Omega_{\mathrm{out}}$ and $\Omega_{\mathrm{in}}$ for which Theorem~\ref{thm:main} fails.

For this reason, the convexity assumption appears to be a natural and reasonable requirement.
\end{oss}







\appendix 
\section{Appendix}\label{app}

This appendix is devoted to introducing and recalling the notions needed for the 
applications to the isoperimetric inequalities for the class of doubly connected 
domains considered in Theorem~\ref{thm:main}. 

To this end, we first recall the definition of the essential boundary, the quermassintegrals, then analyze 
the radial case, and finally recall the known results for mixed boundary conditions, 
namely Robin--Neumann and Neumann--Robin conditions.
\subsection{Boundary and essential boundary}

We recall the following definition of perimeter.

\begin{definizione}
    Let $E \subset \R^n$. The \textbf{perimeter} of $E$, denoted by $P(E)$, is defined as
    \[
        P(E) = \sup \Biggl\{ \int_E \mathrm{div}\, \varphi \, dx : \varphi \in C_c^\infty(\R^n;\R^n), \ \|\varphi\|_\infty \le 1 \Biggr\}.
    \]
    We say that $E$ has \emph{finite perimeter} if $P(E) < +\infty$.
\end{definizione}

In general, we have $P(E) \ne \mathcal{H}^{n-1}(\partial E)$. We now recall the notion of essential boundary, which allows us to compute the perimeter in a more practical way.

\begin{definizione}
    Let $E \subset \R^n$ be a measurable set. A point $x \in \R^n$ belongs to the \textbf{essential boundary} of $E$, denoted by $\partial_\ast E$, if
    \[
        \lim_{r \to 0^+} \frac{\mathcal{L}^n(B_r(x) \cap E)}{\mathcal{L}^n(B_r(x))} > 0
    \]
    and
    \[
        \lim_{r \to 0^+} \frac{\mathcal{L}^n(B_r(x) \setminus E)}{\mathcal{L}^n(B_r(x))} > 0.
    \]
\end{definizione}

It is well known (see, e.g., \cite[Theorem 5.15]{evansgar}) that
\[
    P(E) = \mathcal{H}^{n-1}(\partial_\ast E).
\]

For sets of finite perimeter, the divergence theorem still holds, see \cite[Theorem 5.16]{evansgar}.

\begin{teorema}[\cite{evansgar}, Theorem 5.16]
    Let $E \subset \R^n$ be a set of finite perimeter. Then for $\mathcal{H}^{n-1}$-a.e.\ $x \in \partial_\ast E$, there exists a unique measure-theoretic unit outer normal $\nu_E(x)$ such that
    \[
        \int_E \mathrm{div}\, \phi \, dx = \int_{\partial_\ast E} \phi \cdot \nu_E \, d\mathcal{H}^{n-1}
    \]
    for all $\phi \in C_c^1(\R^n;\R^n)$.
\end{teorema}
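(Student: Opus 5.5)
This is the divergence theorem for sets of finite perimeter, cited from Evans--Gariepy; the plan is to reproduce the standard De Giorgi--Federer route in three steps.

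\textbf{Step 1: the structure measure.} By definition of $P(E)<\infty$, the linear functional
\[
\phi\mapsto \int_E \mathrm{div}\,\phi\,dx
\]
on $C_c^\infty(\R^n;\R^n)$ is bounded by $P(E)\|\phi\|_\infty$. By Hahn--Banach and the Riesz representation theorem, there is a Radon measure $\mu=\|\partial E\|$ with $\mu(\R^n)=P(E)$ and a $\mu$-measurable field $\sigma$ with $|\sigma|=1$ $\mu$-a.e. such that
\[
\int_E \mathrm{div}\,\phi\,dx=\int \phi\cdot\sigma\,d\mu .
\]
Density then extends this from $C_c^\infty$ to $C_c^1$ fields.

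\textbf{Step 2: identify $\mu$ and $\sigma$ geometrically.} I define the reduced boundary $\partial^\ast E$ as the set of points $x$ in the support of $\mu$ where the averages $\fint_{B_r(x)}\sigma\,d\mu$ converge to a unit vector $\nu_E(x)$. The Lebesgue--Besicovitch differentiation theorem gives $\mu(\R^n\setminus\partial^\ast E)=0$ and $\sigma=\nu_E$ $\mu$-a.e. The key technical step, which I expect to be the main obstacle, is De Giorgi's blow-up theorem. At each $x\in\partial^\ast E$, I need to prove three things.
\begin{itemize}
\item Using the relative isoperimetric inequality, I obtain density estimates $\mu(B_r(x))\sim r^{n-1}$ and $|B_r(x)\cap E|,\ |B_r(x)\setminus E|\ge c\,r^n$.
\item By BV compactness, the rescaled sets $(E-x)/r$ converge locally in $L^1$ to the half-space $\{y\cdot\nu_E(x)\le 0\}$.
\item Lower semicontinuity of perimeter gives $\mu(B_r(x))/(\omega_{n-1}r^{n-1})\to 1$.
\end{itemize}
Combining the density bound with a Vitali-type covering then gives $\mu=\mathcal{H}^{n-1}\llcorner\partial^\ast E$.

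\textbf{Step 3: pass to the essential boundary.} The blow-up shows $\partial^\ast E\subset\partial_\ast E$, since each point of $\partial^\ast E$ has density $1/2$. Federer's theorem gives $\mathcal{H}^{n-1}(\partial_\ast E\setminus\partial^\ast E)=0$. It is proved via the lower density estimate $\limsup_r \mu(B_r(x))/r^{n-1}>0$ at points of $\partial_\ast E$, obtained from the relative isoperimetric inequality, together with the standard comparison between Hausdorff measure and upper densities. Hence $\nu_E$ is defined $\mathcal{H}^{n-1}$-a.e. on $\partial_\ast E$; it is unique as a density of $\mu$ by the differentiation theorem. Substituting into Step 1 yields
\[
\int_E\mathrm{div}\,\phi\,dx=\int_{\partial_\ast E}\phi\cdot\nu_E\,d\mathcal{H}^{n-1},
\]
and the identity $P(E)=\mathcal{H}^{n-1}(\partial_\ast E)$ quoted above follows along the way.
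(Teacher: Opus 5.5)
The paper does not prove this statement; it quotes Evans--Gariepy. Your outline follows the same De Giorgi--Federer route used there, and Steps 1 and 3 are sound. The gap is the last sentence of Step 2: from $\mu(B_r(x))/(\omega_{n-1}r^{n-1})\to 1$ on $\partial^\ast E$ and a Vitali-type covering you do \emph{not} get $\mu=\mathcal{H}^{n-1}\llcorner\partial^\ast E$. Covering by balls centred on the set gives the sharp inequality $\mathcal{H}^{n-1}\llcorner\partial^\ast E\le\mu$. The reverse inequality, however, requires bounding $\mu(C)$ for the \emph{arbitrary} small sets $C$ that appear in the definition of $\mathcal{H}^{n-1}$. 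Enclosing $C$ in a ball centred at one of its points loses a factor, so ball densities alone only yield $\mu\le 2^{n-1}\mathcal{H}^{n-1}\llcorner\partial^\ast E$. Write $\mathcal{H}^{n-1}\llcorner\partial^\ast E=f\mu$ with $f\le 1$. Then $f=1$ is equivalent to $\partial^\ast E$ having $\mathcal{H}^{n-1}$-density $1$ at almost every point. That fails for general sets, whose upper density is only known to lie in $[2^{1-n},1]$, and it is exactly where rectifiability enters. The Gauss--Green formula you want carries no weight, so this point is not cosmetic.

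To close the gap you need one of two ingredients.
(a) De Giorgi's rectifiability theorem, as in Evans--Gariepy: the blow-up, Lusin's theorem for $\nu_E$ and Whitney's extension theorem give $\partial^\ast E=\bigcup_k K_k\cup N$, with $\mu(N)=0$ and each $K_k$ a compact subset of a $C^1$ hypersurface. On each $K_k$, $\mathcal{H}^{n-1}$ has density $1$ a.e., so the two measures can be compared by differentiation.
(b) The full blow-up statement $r^{1-n}\mu(x+r\,\cdot\,)\rightharpoonup\mathcal{H}^{n-1}\llcorner\nu_E(x)^\perp$, combined with the isodiametric inequality in the hyperplane $\nu_E(x)^\perp$ and a Blaschke-compactness argument. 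This shows that for each $x\in\partial^\ast E$ and $\varepsilon>0$, every closed $C\ni x$ of small enough diameter satisfies $\mu(C)\le(1+\varepsilon)\,\omega_{n-1}(\operatorname{diam}C/2)^{n-1}$.

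Two related points in your bullets also need repair. In the third bullet, lower semicontinuity only gives $\liminf_{r\to0}\mu(B_r(x))/(\omega_{n-1}r^{n-1})\ge 1$. The matching upper bound, and the weak-$\ast$ convergence needed in (b), come from the definition of $\partial^\ast E$: $\mu(B_r(x))=(1+o(1))\,\nu_E(x)\cdot\int_{B_r(x)}\sigma\,d\mu$, with the right-hand side evaluated by Gauss--Green on $E\cap B_r(x)$ against the constant field $\nu_E(x)$. That comparison gives $\mu(B_r(x))\le 2\,\mathcal{H}^{n-1}(E\cap\partial B_r(x))$ for a.e.\ small $r$. Feeding this into the isoperimetric inequality for $E\cap B_r(x)$ and $B_r(x)\setminus E$ is what produces the volume lower bounds and the upper density bound of your first bullet; the relative isoperimetric inequality by itself does not.
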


\subsection{Quermassintegrals}
We briefly summarize the main properties of quermassintegrals for convex sets,
following the standard reference \cite{Schneider_2013}. Let $K \subset \mathbb{R}^n$
be a nonempty, bounded, convex set, and let $B$ be the unit ball centered at the
origin. For $\rho>0$, the volume of the Minkowski sum $K+\rho B$ admits the
expansion
\begin{equation}
\label{Steiner_formula}
|K + \rho B| = \sum_{i=0}^n \binom{n}{i} W_i(K)\, \rho^i,
\end{equation}
known as the Steiner formula. The coefficients $W_i(K)$ are the
\emph{quermassintegrals} of $K$. In particular,
\[
W_0(K)=|K|, \qquad n W_1(K)=P(K), \qquad W_n(K)=\omega_n,
\]
where $P(K)$ denotes the perimeter (i.e., the $(n-1)$–Hausdorff measure of
$\partial K$), and $\omega_n = |B|$ is the volume of the unit ball.

When the boundary of $K$ is $C^2$, each $W_i(K)$ admits an integral
representation in terms of the principal curvatures of $\partial K$. Although this
will not be required explicitly, it provides geometric intuition on the nature of
the coefficients.

Quermassintegrals satisfy a parallel Steiner-type identity:
for every $j \in \{0,\dots,n-1\}$,
\begin{equation}
\label{Steiner_quermass}
W_j(K+\rho B) = \sum_{i=0}^{n-j} \binom{n-j}{i} W_{j+i}(K)\, \rho^i.
\end{equation}

In the special case $j=1$, the previous formula recovers the expansion of the
perimeter of the outer parallel set:
\begin{align*}
P(K+\rho B) &= n \sum_{i=0}^{n-1} \binom{n-1}{i} W_{i+1}(K)\, \rho^i \\[1.2ex]
&= P(K) + n(n-1) W_2(K)\, \rho + O(\rho^2).
\end{align*}
Consequently, the first variation of the perimeter is controlled by the second
quermassintegral:
\begin{equation}
\label{derivata_perimetro}
\lim_{\rho \to 0^+} \frac{P(K+\rho B) - P(K)}{\rho} = n(n-1) W_2(K).
\end{equation}

Another fundamental tool in shape optimization is provided by the
Aleksandrov–Fenchel inequalities, which for quermassintegrals read:
for all $0 \le i < j \le n-1$,
\begin{equation}
\label{Aleksandrov_Fenchel}
\left(\frac{W_j(K)}{\omega_n}\right)^{\frac{1}{\,n-j\,}}
\ge
\left(\frac{W_i(K)}{\omega_n}\right)^{\frac{1}{\,n-i\,}},
\end{equation}
with equality if and only if $K$ is a ball. When $i=1$ and $j=2$, this yields the
explicit estimate
\begin{equation}
\label{AF_W2}
W_2(K) \ge
n^{-\frac{n-2}{\,n-1\,}}\, \omega_n^{\frac{1}{\,n-1\,}}\,
P(K)^{\frac{n-2}{\,n-1\,}}.
\end{equation}

\subsection{Some useful Lemmas}

Let $\Omega \subset \mathbb{R}^n$ be a bounded, open, convex set.
For $t \in [0,r_\Omega]$, we denote the inner parallel set at distance $t$ by
\[
\Omega_t := \{x \in \Omega : d(x,\partial\Omega) > t\},
\]
where $d(x,\partial\Omega)$ is the Euclidean distance to the boundary, and
$r_\Omega$ is the \emph{inradius} of $\Omega$, i.e., the radius of the largest ball
contained in $\Omega$.

By the Brunn–Minkowski theorem and the concavity of the distance function on
convex domains, the mapping
\[
t \mapsto P(\Omega_t)^{\frac{1}{\,n-1\,}}
\]
is concave on $[0,r_\Omega]$, and hence absolutely continuous on
$(0,r_\Omega)$.

The next result provides a lower bound for the perimeter variation of inner
parallel sets.
\begin{lemma}
    \label{lemma_derivata_perimetro_1}
    Let $\Omega$ be a bounded, convex, open set in $\R^n$. Then for almost every $t \in (0,r_{\Omega})$
    \begin{equation}
    \label{eq_lemma_1}
        -\frac{d}{dt} P(\Omega_t) \geq n(n-1) W_2(\Omega_t),
    \end{equation}
    and equality holds if $\Omega$ is a ball.
    \end{lemma}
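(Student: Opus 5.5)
The plan is to write the perimeter of $\Omega_t$ through the support function and to compare $\Omega_t$ with an outer parallel set. For $0<t<s<r_\Omega$, convexity of $\Omega$ gives the inclusion
\[
\Omega_s + (s-t)B \subseteq \Omega_t .
\]
Indeed, if $d(x,\partial\Omega)>s$ and $|y|<s-t$, then $d(x+y,\partial\Omega)>t$, using $\Omega_s\subset\Omega$ and the $1$-Lipschitz property of the distance.

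Perimeter is monotone under inclusion of convex sets. Applying this to the inclusion above and using the Steiner-type formula \eqref{Steiner_quermass} with $j=1$, we get
\[
P(\Omega_t)\ \ge\ P(\Omega_s+(s-t)B)\ =\ P(\Omega_s)+n(n-1)W_2(\Omega_s)(s-t)+O((s-t)^2).
\]
Rearranging gives
\[
\frac{P(\Omega_t)-P(\Omega_s)}{s-t}\ \ge\ n(n-1)W_2(\Omega_s)+O(s-t).
\]

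The main step is the passage to the limit. The map $t\mapsto P(\Omega_t)^{1/(n-1)}$ is concave, as recalled just before the lemma, so $P(\Omega_t)$ is differentiable at almost every $t\in(0,r_\Omega)$. Fix such a $t$, fix $s$, and let $t\to s^-$ in the inequality above. The left-hand side converges to $-\frac{d}{ds}P(\Omega_s)$. The remainder $O(s-t)$ is uniform, because the coefficients $W_{i}(\Omega_s)$ are bounded by those of $\Omega$ by monotonicity of quermassintegrals. This yields \eqref{eq_lemma_1} at every differentiability point $s$. There is no continuity issue for $W_2$, since $W_2(\Omega_s)$ is evaluated at the fixed point $s$.

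For the equality case, take $\Omega=B_R$. Then $\Omega_t=B_{R-t}$, so
\[
P(\Omega_t)=n\omega_n(R-t)^{n-1},\qquad -\tfrac{d}{dt}P(\Omega_t)=n(n-1)\omega_n(R-t)^{n-2}.
\]
On the other hand,
\[
W_2(B_{R-t})=\omega_n(R-t)^{n-2},
\]
so equality holds in \eqref{eq_lemma_1}.

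The step I expect to be most delicate is justifying the inclusion $\Omega_s+(s-t)B\subseteq\Omega_t$ together with perimeter monotonicity. Perimeter monotonicity holds for convex sets via Cauchy's formula. Convexity of $\Omega_s$ follows from the concavity of the distance function. Everything else is a direct consequence of \eqref{Steiner_quermass}.
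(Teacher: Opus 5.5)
Your proof is correct and is essentially the standard argument the paper has in mind: the paper states this lemma without proof, but the ingredients it recalls just before it (the first-variation formula \eqref{derivata_perimetro} and the concavity of $t\mapsto P(\Omega_t)^{1/(n-1)}$) are exactly what you combine, together with the inclusion $\Omega_s+(s-t)B\subseteq\Omega_t$, monotonicity of perimeter and the Steiner expansion. As a minor simplification, all quermassintegrals are nonnegative, so Steiner's formula gives the exact bound $P(\Omega_t)-P(\Omega_s)\ge n(n-1)W_2(\Omega_s)(s-t)$ and no control of the remainder is needed; also, the inclusion holds for any open set, so convexity is used only for monotonicity of perimeter and for Steiner's formula.
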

A convenient reformulation applies when a function depends only on the
distance.

Let $f:[0,+\infty)\to[0,+\infty)$ be a strictly increasing $C^1$ function
with $f(0)=0$, and set $u(x)=f(d(x,\partial\Omega))$. Then for any $t>0$
the superlevel set of $u$ can be identified with a parallel set of $\Omega$:
\[
E_t := \{x\in\Omega : u(x)>t\} = \Omega_{f^{-1}(t)}.
\]

\begin{lemma}
    \label{lemma_derivata_perimetro_2}
    Let $f \colon [0,+\infty) \to [0,+\infty)$ be a strictly increasing $C^1$ function with $f(0)=0$. Set $u(x)=f(d(x))$ and
    \[
    E_t = \Set{ x \in \Omega \, : \, u(x) > t } = \Omega_{f^{-1}(t)},
    \]
    then
    \begin{equation}
        \label{eq_lemma_2}
        -\frac{d}{dt} P(E_t) \geq (n-1) \frac{W_2(E_t)}{\abs{\nabla u}_{u=t}}.
    \end{equation}
\end{lemma}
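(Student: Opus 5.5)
We prove Lemma~\ref{lemma_derivata_perimetro_2} by reducing it to Lemma~\ref{lemma_derivata_perimetro_1} through the change of variables $s=f^{-1}(t)$. Since $f$ is strictly increasing and $C^1$, the superlevel sets are exactly inner parallel sets, $E_t=\Omega_{s(t)}$ with $s(t)=f^{-1}(t)$. We therefore have
\[
P(E_t)=P(\Omega_{s(t)}),
\]
and the task is to differentiate this composition in $t$.

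\textbf{Step 1: chain rule.} By Brunn--Minkowski the map $s\mapsto P(\Omega_s)^{1/(n-1)}$ is concave, so $s\mapsto P(\Omega_s)$ is locally Lipschitz and differentiable for a.e.\ $s$. We restrict to the $t$ for which $f'(s(t))>0$. At such $t$ the inverse $s$ is differentiable with $s'(t)=1/f'(s(t))$. Composing with a monotone $C^1$ change of variables sends null sets to null sets wherever $f'>0$. Hence for a.e.\ such $t$,
\[
-\frac{d}{dt}P(E_t)=-\frac{d}{ds}P(\Omega_s)\Big|_{s=s(t)}\cdot\frac{1}{f'(s(t))}.
\]

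\textbf{Step 2: identify $|\nabla u|$ on the level set.} The distance function $d$ is differentiable a.e., with $|\nabla d|=1$ away from the ridge set. Therefore $|\nabla u|=f'(d)|\nabla d|=f'(s(t))$ on $\{u=t\}$, up to an $\mathcal H^{n-1}$-null set. This value is constant on the level set, so $|\nabla u|_{u=t}=f'(s(t))$ makes sense.

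\textbf{Step 3: apply the perimeter lemma.} Lemma~\ref{lemma_derivata_perimetro_1} gives
\[
-\frac{d}{ds}P(\Omega_s)\ge n(n-1)W_2(\Omega_s)\quad\text{for a.e. } s.
\]
Substituting $\Omega_{s(t)}=E_t$ and dividing by $f'(s(t))=|\nabla u|_{u=t}$ yields
\[
-\frac{d}{dt}P(E_t)\ge n(n-1)\frac{W_2(E_t)}{|\nabla u|_{u=t}},
\]
which is stronger than \eqref{eq_lemma_2}, since $n\ge1$ and $W_2\ge 0$ for convex sets.

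\textbf{Main obstacle.} The delicate point is the measure-theoretic bookkeeping in Step~1. The argument needs the a.e.\ set in $s$ from Lemma~\ref{lemma_derivata_perimetro_1} to pull back to an a.e.\ set in $t$. It also needs the points where $f'$ vanishes handled, where $|\nabla u|=0$ and the inequality should be read as trivial or $+\infty$. We would first use that $f$ maps null sets to null sets, being $C^1$ and hence locally Lipschitz. We would then check that $f^{-1}$ preserves null sets on the open set $\{f'>0\}$. The image of $\{f'=0\}$ is null by Sard's theorem in one dimension.
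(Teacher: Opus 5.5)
The paper states this lemma without proof, recalling it from the literature. Your argument is the standard derivation and is correct: you write $P(E_t)=P(\Omega_{f^{-1}(t)})$, apply the pointwise chain rule with $(f^{-1})'(t)=1/f'(f^{-1}(t))=1/|\nabla u|_{u=t}$, invoke Lemma~\ref{lemma_derivata_perimetro_1}, and discard the null sets $f(N)$ and $f(\{f'=0\})$. As you noticed, the argument actually gives the sharp constant $n(n-1)$, with equality for balls, so the stated $(n-1)$ is a weaker consequence and most likely a dropped factor $n$ in the paper.
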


As a consequence, $t\mapsto P(E_t)$ is absolutely continuous on every bounded
interval, being the composition of two absolutely continuous maps.

Finally, we observe that if you consider the ball with the same $(n-1)-$quermass, then we can compare all the others.

\begin{lemma}
\label{lemmadp}
    Let $\Omega$ a bounded, open and convex set of $\R^n$, and let $B$ be the ball for which $$W_{n-1}(\Omega)= W_{n-1}(B)$$. Then, for all $0 \leq j \leq n-1$, we have
    $$W_j(\Omega) \leq W_j(B).$$
\end{lemma}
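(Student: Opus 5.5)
The key fact is the one recorded in the Aleksandrov--Fenchel inequalities \eqref{Aleksandrov_Fenchel}: for a convex set the normalized quantities $\left(W_j(\Omega)/\omega_n\right)^{1/(n-j)}$ are nondecreasing in $j$. The plan is to read this chain at the top index $j=n-1$ and pass to the ball with the same $(n-1)$-quermassintegral. For a ball $B_R$ the Steiner formula \eqref{Steiner_formula} gives $W_j(B_R)=\omega_n R^{n-j}$ for every $j$, since $|B_R+\rho B|=\omega_n(R+\rho)^n$ and we can compare coefficients. So the normalized quantity $\left(W_j(B_R)/\omega_n\right)^{1/(n-j)}$ equals $R$ for every $j$.

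First I fix $R$ through the normalization $W_{n-1}(B)=W_{n-1}(\Omega)$, which means $\omega_n R=W_{n-1}(\Omega)$, i.e. $R=W_{n-1}(\Omega)/\omega_n$. Then I take any $0\le j<n-1$ and apply \eqref{Aleksandrov_Fenchel} with $i=j$ and top index $n-1$:
\[
\left(\frac{W_j(\Omega)}{\omega_n}\right)^{\frac{1}{n-j}}\le \frac{W_{n-1}(\Omega)}{\omega_n}=R .
\]
Raising both sides to the power $n-j>0$ gives $W_j(\Omega)\le \omega_n R^{n-j}=W_j(B)$. The case $j=n-1$ holds with equality by the choice of $R$.

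If one prefers not to invoke the general $(i,j)$ form directly, I would chain the consecutive inequalities $W_{k}^2\ge W_{k-1}W_{k+1}$. These yield the monotonicity of $k\mapsto (W_k/\omega_n)^{1/(n-k)}$, using $W_n=\omega_n$, and the same conclusion follows. I do not see any real obstacle here. The only point to check is that the index range $0\le i<j\le n-1$ in \eqref{Aleksandrov_Fenchel} indeed allows $j=n-1$, and that the case $j=0$ (volume) is covered; both are, so the lemma follows directly. Equality for some $j<n-1$ forces $\Omega$ to be a ball, by the equality case of \eqref{Aleksandrov_Fenchel}.
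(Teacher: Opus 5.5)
Your proof is correct and follows the same route as the paper: the paper states this lemma without a written proof, right after recording \eqref{Aleksandrov_Fenchel}. The intended argument is exactly yours, namely \eqref{Aleksandrov_Fenchel} with top index $n-1$ combined with $W_j(B_R)=\omega_n R^{n-j}$ from the Steiner formula.
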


\subsection{The radial cases}
In this subsection, we study a property of $\lambda^{RN}(A_{s,t})$ and $\lambda^{NR}(A_{s,t})$. For the sake of clarity, we recall their definitions:
\begin{equation*}
    \label{ARN}
    \lambda^{RN}(A_{s,t})=\min_{\substack{w\in H^{1}(A_{s,t}) \\ w \neq 0}} \frac{\displaystyle{\int_{A_{s,t}} \abs{\nabla w}^2\, dx+\beta_{in} \int_{\partial B_s} \abs{w}^2\, d\mathcal{H}^{n-1}}}{\displaystyle{\int_{A_{s,t}} \abs{w}^2\, dx}}.
\end{equation*}
and 
\begin{equation*}
    \label{ANR}
    \lambda^{NR}(A_{s,t})=\min_{\substack{w\in H^{1}(A_{s,t}) \\ w \neq 0}} \frac{\displaystyle{\int_{A_{s,t}} \abs{\nabla w}^2\, dx+\beta_{out} \int_{\partial B_t} \abs{w}^2\, d\mathcal{H}^{n-1}}}{\displaystyle{\int_{A_{s,t}} \abs{w}^2\, dx}}.
\end{equation*}
\begin{lemma}\label{incrdecr}
Let $0< R_1 \leq r \leq R_2$. Then, if $(\beta_{in}, \beta_{out}) \in (0, + \infty)^2$, it holds 
 $$r \mapsto \lambda^{RN}(A_{R_1,r}) \,\,\textnormal{is a decreasing function in}\, (R_1, R_2)$$

 $$r \mapsto \lambda^{NR}(A_{r,R_2}) \,\,\textnormal{is an increasing function in}\, (R_1, R_2);$$

\end{lemma}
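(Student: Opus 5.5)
The plan is to reduce both monotonicity statements to a radial ODE and then compare eigenvalues of nested annuli through the variational characterization. We treat $r\mapsto\lambda^{RN}(A_{R_1,r})$; the other map is symmetric.

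\emph{Step 1: radial eigenfunction and its sign.} By uniqueness and simplicity of the first eigenvalue, and rotation invariance of $A_{R_1,r}$, the positive first eigenfunction is radial, $v=v(\rho)$. It solves $v''+\frac{n-1}{\rho}v'+\lambda v=0$ on $(R_1,r)$, with $v'(r)=0$ and the Robin condition at $\rho=R_1$. Since the outward normal there points toward the origin, that condition reads $-v'(R_1)+\beta_{in}v(R_1)=0$, so $v'(R_1)=\beta_{in}v(R_1)>0$. Writing the equation as $(\rho^{n-1}v')'=-\lambda\rho^{n-1}v<0$, we see that $\rho^{n-1}v'$ is strictly decreasing. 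Because it vanishes at $\rho=r$, it is positive on $[R_1,r)$, so $v$ is increasing.

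\emph{Step 2: comparison of Rayleigh quotients.} Take $R_1<r<r'$ and let $v$ be the first eigenfunction on $A_{R_1,r}$. Extend $v$ to $A_{R_1,r'}$ by the constant $v(r)$ on $A_{r,r'}$. This extension lies in $H^1$, and the numerator of the Rayleigh quotient is unchanged: the gradient vanishes on the added shell, and the boundary term on $\partial B_{R_1}$ is the same. The denominator, however, strictly increases by $v(r)^2|A_{r,r'}|>0$. Since the numerator is positive (because $\beta_{in}>0$), we obtain
\[
\lambda^{RN}(A_{R_1,r'})<\lambda^{RN}(A_{R_1,r}),
\]
which is strict decrease. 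Note that Step 1 is not actually needed for this inequality; it only confirms that the eigenfunction is positive and well behaved.

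\emph{Step 3: the Neumann--Robin case.} For $\lambda^{NR}(A_{r,R_2})$ with $r<r'$, use the first eigenfunction $w$ on the larger annulus $A_{r,R_2}$ and restrict it to $A_{r',R_2}$. The Robin term on $\partial B_{R_2}$ is unchanged, but both $\int|\nabla w|^2$ and $\int w^2$ decrease, so a direct comparison of quotients does not immediately give the sign. Instead, reverse the argument of Step 2. Take the first eigenfunction $w$ on the smaller annulus $A_{r',R_2}$, with $w'(r')=0$, and extend it to $A_{r,R_2}$ by the constant $w(r')$ on $A_{r,r'}$. The numerator is unchanged (the gradient vanishes on the new shell and the Robin term on $\partial B_{R_2}$ is the same), while the denominator increases. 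Hence
\[
\lambda^{NR}(A_{r,R_2})<\lambda^{NR}(A_{r',R_2}),
\]
so $r\mapsto\lambda^{NR}(A_{r,R_2})$ is increasing.

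\emph{Main obstacle.} The only delicate point is the strictness of the inequalities, and it holds because the added shell has positive measure. The only other things to verify are the $H^1$ membership of the constant extensions, which follows from continuity across the sphere, and the positivity of the Rayleigh numerators when $\beta>0$. We expect no real difficulty beyond these checks.
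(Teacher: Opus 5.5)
Your proposal is correct and follows the paper's proof: extend the first eigenfunction on the smaller annulus by a constant across the added shell and compare Rayleigh quotients. The paper writes out only the Robin--Neumann case and calls the Neumann--Robin case analogous, which is exactly your Step~3; your Step~1 is superfluous, and your remark that the shell has positive measure and the numerator is positive gives \emph{strict} monotonicity, which the paper's chain of non-strict inequalities does not record explicitly.
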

\begin{proof}
Let $r_1<r_2$ and consider $v$ the eigenfunction associated to $\lambda^{RN}(A_{R_1,r_1})$. Let us extend $v$ in the spherical shell $A_{r_1,r_2}$, to obtain the function $\tilde v$ defined in $A_{R_1,r_2}$. Using $\tilde v$ as a test in the definition of $\lambda^{RN}(A_{R_1,r_2})$ we have \begin{equation*}
   \begin{aligned}
       \lambda^{RN}(A_{R_1,r_2})&\leq \frac{\displaystyle{\int_{A_{R_1,r_2}} \abs{\nabla \tilde v}^2\, dx+\beta_{in} \int_{\partial B_{R_1}} \abs{\tilde v}^2\, d\mathcal{H}^{n-1}}}{\displaystyle{\int_{A_{R_1,r_2}} \abs{\tilde v}^2\, dx}}\\ &\leq \frac{\displaystyle{\int_{A_{R_1,r_1}} \abs{\nabla v}^2\, dx+\beta_{in} \int_{\partial B_{R_1}} \abs{v}^2\, d\mathcal{H}^{n-1}}}{\displaystyle{\int_{A_{R_1,r_1}} \abs{v}^2\, dx}} = \lambda^{RN}(A_{R_1,r_1}).
   \end{aligned}
\end{equation*} 
The increasing nature of the function $r \mapsto \lambda^{NR}(A_{r,R_2})$ can be proved analogously.
\end{proof}

\begin{prop}\label{autofunzioni}
Let $0 < R_1 < R_2$ and $(\beta_{in}, \beta_{out}) \in (0, + \infty)^2$. The eigenfunctions corresponding to 
\[
\lambda^{RN} (A_{R_1,R_2}), \quad \lambda^{NR} (A_{R_1,R_2}), \quad \text{and} \quad \lambda^{RR} (A_{R_1,R_2})
\] 
have the following properties:
\begin{itemize}
    \item[(i)] the eigenfunction of $\lambda^{RN} (A_{R_1,R_2})$ is radially increasing;
    \item[(ii)] the eigenfunction of $\lambda^{NR} (A_{R_1,R_2})$ is radially decreasing;
    \item[(iii)] the eigenfunction of $\lambda^{RR} (A_{R_1,R_2})$ is radial and changes monotonicity exactly once.
\end{itemize}
\end{prop}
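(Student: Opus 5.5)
By uniqueness of the first eigenfunction on the annulus $A_{R_1,R_2}$, I would first reduce all three statements to radial ODEs. Each of the three eigenvalues is simple, and the problems are invariant under rotations. So the positive first eigenfunction $w$ satisfies $w\circ T=w$ for every rotation $T$, since $w\circ T$ is again a positive first eigenfunction with the same normalization. Hence $w(x)=\phi(|x|)$, where
\[
\bigl(\rho^{n-1}\phi'(\rho)\bigr)'=-\lambda\,\rho^{n-1}\phi(\rho),\qquad \rho\in(R_1,R_2),
\]
and $\phi>0$ on $[R_1,R_2]$ (positivity up to the boundary follows from Hopf and the Robin condition with $\beta>0$). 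Integrating this identity from an endpoint gives a formula for the flux $\rho^{n-1}\phi'(\rho)$, and I would read the monotonicity off that formula. Throughout, $\lambda>0$ because $\beta_{in},\beta_{out}>0$ and the Rayleigh quotient of a nonzero function cannot vanish.

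For (i), the Neumann condition holds at $\rho=R_2$, so $\phi'(R_2)=0$. Integrating from $\rho$ to $R_2$ gives
\[
\rho^{n-1}\phi'(\rho)=\lambda\int_\rho^{R_2}s^{n-1}\phi(s)\,ds>0\qquad \text{for }\rho<R_2,
\]
so $\phi$ is strictly increasing. This is consistent with the Robin condition at the inner boundary: there the outward normal is $-x/|x|$, giving $-\phi'(R_1)+\beta_{in}\phi(R_1)=0$, i.e. $\phi'(R_1)=\beta_{in}\phi(R_1)>0$. For (ii), the Neumann condition holds at $\rho=R_1$, so $\phi'(R_1)=0$. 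Integrating from $R_1$ gives
\[
\rho^{n-1}\phi'(\rho)=-\lambda\int_{R_1}^{\rho}s^{n-1}\phi(s)\,ds<0,
\]
so $\phi$ is strictly decreasing.

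For (iii), the function $g(\rho)=\rho^{n-1}\phi'(\rho)$ satisfies $g'=-\lambda\rho^{n-1}\phi<0$, so $g$ is strictly decreasing on $[R_1,R_2]$. The boundary conditions give $g(R_1)=R_1^{n-1}\beta_{in}\phi(R_1)>0$ and $g(R_2)=-R_2^{n-1}\beta_{out}\phi(R_2)<0$. Hence $g$, and therefore $\phi'$, has exactly one zero $r^\ast\in(R_1,R_2)$. It follows that $\phi$ increases on $(R_1,r^\ast)$ and decreases on $(r^\ast,R_2)$, so it changes monotonicity exactly once. The Dirichlet limits $\beta=+\infty$ would be handled in the same way, using Hopf's lemma to give $\phi'(R_1)>0$ and $\phi'(R_2)<0$.

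The main obstacle is justifying radiality cleanly, that is, the simplicity of the first eigenvalue for the mixed and Robin problems. This follows from the standard argument: a first eigenfunction does not change sign, because $|w|$ is also a minimizer and the strong maximum principle applies, so two linearly independent first eigenfunctions cannot both be positive. Once radiality is in hand, the rest is the one-line flux computation above.
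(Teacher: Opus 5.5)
Your proposal is correct and follows essentially the same route as the paper: radiality from simplicity of the first eigenvalue and rotational invariance, then the sign of $(\rho^{n-1}\phi')' = -\lambda\rho^{n-1}\phi$ combined with the boundary conditions at the two ends. The only difference is in (iii). You use the strict monotonicity of the flux $\rho^{n-1}\phi'$ to get a unique zero directly, whereas the paper picks a maximum point and reapplies (i) and (ii) on the two subintervals; your version makes the uniqueness of the turning point a bit more explicit.
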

\begin{proof} We recall that a direct argument involving the characterization \eqref{ray} guarantees that, in all three cases, the first eigenvalue is simple and the corresponding eigenfunction may be chosen to be strictly positive.
Furthermore, due to the rotational invariance of the problem \eqref{probprinc}, the eigenfunctions corresponding to $\lambda^{RN}(A_{R_1,R_2}),\lambda^{NR}(A_{R_1,R_2}),\lambda^{RR}(A_{R_1,R_2})$ are radially symmetric. So, if $u>0$ is the first eigenfunction in $A_{R_1,R_2}$ then we can write $u(x)=:\psi(|x|)$.
\item[\it{(i)}] The radial problem can be written:
\begin{equation}
   \begin{cases}
       \displaystyle -\frac{1}{r^{n-1}}(\psi'(r)r^{n-1})'=\lambda^{RN}(A_{R_1,R_2})\psi(r) \,\,\,\, r\in(R_1,R_2)
       \\
      - \psi'(R_1)+\beta_{in}\psi(R_1)=0
       \\
       \psi'(R_2)=0.
   \end{cases} 
\end{equation}
Since $\psi>0$, then $(\psi'(r)r^{n-1})'<0$; from the boundary condition in $R_2$ it follows that $\psi'(r)>0$ for all $r\in(R_1,R_2)$
\item[\it{(ii)}] The argument to prove the descreasing nature of the eigenfunction $\lambda^{NR}(A_{R_1,R_2})$ is similar to that in $\it{(i)}$.
\item[\it{(iii)}] In this case we write the radial problem as
\begin{equation}
   \begin{cases}
       \displaystyle -\frac{1}{r^{n-1}}(\psi'(r)r^{n-1})'=\lambda^{RR}(A_{R_1,R_2})\psi(r) \,\,\,\, r\in(R_1,R_2)
       \\
      - \psi'(R_1)+\beta_{in}\psi(R_1)=0
       \\
       \psi'(R_2)+\beta_{out}\psi(R_2)=0.
   \end{cases} 
\end{equation}
Since $\psi>0$, then $\psi'(R_1)>0$ and $\psi'(R_2)<0$. Consequently, there exists $\rho\in(R_1,R_2)$ such that $\psi(\rho)=\displaystyle \max_{R_1<t<R_2}\psi(t)$. So $\psi'(\rho)=0$ and $\psi$ is a solution of the following problem
\begin{equation}
   \begin{cases}
       \displaystyle -\frac{1}{r^{n-1}}(\psi'(r)r^{n-1})'=\lambda^{RN}(A_{R_1,R_2})\psi(r) \,\,\,\, r\in(R_1,\rho)
       \\
      - \psi'(R_1)+\beta_{in}\psi(R_1)=0
       \\
       \psi'(\rho)=0.
   \end{cases} 
\end{equation}
From $\it{(i)}$ we have that in $(R_1,\rho)$ the function $\psi$ is only increasing. From an analogous result on $(\rho,R_2)$ the assertion follows.
\end{proof}

\subsection{Reverse Faber-Krahn for mixed eigenvalue involving a NBC}
In this subsection, we recall the results proved in \cite{DPP} and \cite{Paoli_Piscitelli_Trani}, 
and we discuss an extension needed for our purposes. In particular, in the spirit of \cite{bg2},\cite{dTAMS}, and \cite{L}, we can remove the 
Lipschitz regularity assumption on the sets involved, showing that the corresponding 
inequalities remain valid under weaker geometric assumptions.
Moreover, here we prove a the rigidity of such inequality.

More precisely, let $\beta_{\mathrm{in}} > 0$, and let $\Omega_{\mathrm{in}} \subset\subset \Omega_{\mathrm{out}} \subset \mathbb{R}^n$, where $\Omega_{\mathrm{in}}$ is Lipschitz and $\Omega_{\mathrm{out}}$ is an open set of finite perimeter. Let $\Omega = \Omega_{\mathrm{out}} \setminus \overline{\Omega}_{\mathrm{in}}$, we consider the Sobolev space $W^{1,2}(\Omega)$ as in \cite[Chapter~1]{mazya}, which consists of all functions $ u \in L_2(\Omega)$
such that its distributional derivative lies in $ u \in L_2(\Omega)$. Proceding as in \cite{dTAMS, BG}, we
define the first eigenvalue of $\Omega$ as follows
\begin{equation}
\label{RN}
\lambda^{RN}(\Omega)
= \inf_{\substack{\varphi \in W^{1,2}(\Omega) \\ \varphi \ne 0}}
\frac{\displaystyle 
      \int_\Omega |\nabla \varphi|^2\, dx 
      + \beta_{\mathrm{in}} \int_{\partial \Omega_{\mathrm{in}}} \varphi^2\, d\mathcal{H}^{n-1}
     }
     {\displaystyle 
      \int_\Omega \varphi^2\, dx
     }.
\end{equation}
 Following \cite{bg2}, this functional is well defined in $H^1(\Omega)$, and it holds the following Lemma.

\begin{lemma}\label{autofpos}
Let $\beta_{\mathrm{in}} > 0$, and let $\Omega_{\mathrm{in}} \subset\subset \Omega_{\mathrm{out}} \subset \mathbb{R}^n$, where $\Omega_{\mathrm{in}}$ is Lipschitz and $\Omega_{\mathrm{out}}$ is an open set of finite perimeter. Let $\Omega = \Omega_{\mathrm{out}} \setminus \overline{\Omega}_{\mathrm{in}}$, and let $\lambda^{RN}(\Omega)$ be defined in \eqref{RN}. 
    If there exists $\lambda \geq 0$ and  $w$ a positive function such that  
    
\begin{equation}
\label{weakrn}
    \int _\Omega \nabla w \nabla \varphi \, dx+\beta_{in} \int_{\partial\Omega_{in}} w \varphi \, d \mathcal{H}^{n-1} = \lambda\int_{\Omega} w \varphi \, dx, \qquad \forall \varphi \in W^{1,2}(\Omega),
    \end{equation}
then \begin{equation}
\label{lambda=rn}
    \lambda=\lambda^{RN}(\Omega)
\end{equation}
and 

\begin{equation*}
    \lambda^{RN}(\Omega)
=
\frac{\displaystyle 
      \int_\Omega |\nabla w|^2\, dx 
      + \beta_{\mathrm{in}} \int_{\partial \Omega_{\mathrm{in}}} w^2\, d\mathcal{H}^{n-1}
     }
     {\displaystyle 
      \int_\Omega w^2\, dx
     }.
\end{equation*}
\end{lemma}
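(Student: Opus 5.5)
We prove Lemma~\ref{autofpos} with the standard argument that a positive weak eigenfunction must belong to the first eigenvalue. The idea is to compare $w$ with a minimizer of \eqref{RN} and use a Picone-type (or orthogonality) argument. There are three steps: first establish that the infimum in \eqref{RN} is attained by a nonnegative function $v$; then show $\lambda=\lambda^{RN}(\Omega)$; finally read off the Rayleigh quotient identity by testing \eqref{weakrn} with $\varphi=w$.

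\textbf{Step 1: existence of a nonnegative minimizer.} Take a minimizing sequence normalized in $L^2(\Omega)$. The positivity of $\beta_{in}$ gives a uniform $W^{1,2}(\Omega)$ bound. Compactness of $W^{1,2}(\Omega)\hookrightarrow L^2(\Omega)$ is not automatic, since $\Omega_{out}$ is only of finite perimeter. Here we would invoke the framework of \cite{bg2,BG,dTAMS}: Robin functionals on finite-perimeter sets with the Maz'ya space have lower semicontinuity and a compact embedding, possibly after passing to the $SBV$ relaxation and showing the jump part vanishes, because the Neumann part carries no boundary penalty. The boundary term is weakly lower semicontinuous because the trace on the Lipschitz component $\partial\Omega_{in}$ is compact. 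Replacing $v$ by $|v|$ does not increase the quotient, so $v\ge0$. The Euler--Lagrange equation is \eqref{weakrn} with $\lambda^{RN}(\Omega)$ in place of $\lambda$ and $v$ in place of $w$.

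\textbf{Step 2: $\lambda=\lambda^{RN}(\Omega)$.} Test the equation for $w$ with $\varphi=v$ and the equation for $v$ with $\varphi=w$, then subtract. The gradient terms and the boundary terms are symmetric and cancel, leaving
\[
(\lambda-\lambda^{RN}(\Omega))\int_\Omega wv\,dx=0 .
\]
Since $w>0$, $v\ge0$ and $v\not\equiv0$, the integral is strictly positive, so $\lambda=\lambda^{RN}(\Omega)$. This step needs $w\in W^{1,2}(\Omega)$, which is implicit in \eqref{weakrn}. It also needs the trace of $w$ on $\partial\Omega_{in}$ to make sense, which holds by the Lipschitz regularity of $\Omega_{in}$ near that boundary component.

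\textbf{Step 3: the Rayleigh quotient identity.} Choosing $\varphi=w$ in \eqref{weakrn} gives
\[
\int_\Omega|\nabla w|^2\,dx+\beta_{in}\int_{\partial\Omega_{in}}w^2\,d\mathcal H^{n-1}=\lambda\int_\Omega w^2\,dx ,
\]
which together with Step 2 is exactly the stated formula.

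The main obstacle is Step 1: the attainment of the infimum when $\Omega_{out}$ is merely of finite perimeter and the embedding may fail to be compact. If that cannot be secured directly, a fallback avoids minimizers altogether. For any $\varphi$ in a dense class, for instance bounded functions, test \eqref{weakrn} with $\varphi^2/w$ after truncating $w$ from below by $\varepsilon$. Picone's inequality
\[
|\nabla\varphi|^2\ge\nabla w\cdot\nabla(\varphi^2/w)
\]
then yields that the Rayleigh quotient of $\varphi$ is at least $\lambda$, so $\lambda^{RN}(\Omega)\ge\lambda$. The reverse inequality comes from taking $\varphi=w$ as in Step 3. This route requires only positivity of $w$ and the justification of the limit $\varepsilon\to0$, which follows by monotone and dominated convergence. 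I would adopt this Picone route as the primary argument, since it is robust with respect to the weak regularity of $\Omega_{out}$.
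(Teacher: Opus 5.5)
Your adopted Picone route is the paper's proof. The paper takes near-minimizers $v_\sigma$ of \eqref{RN}, tests \eqref{weakrn} with $v_\sigma^2/(w+\varepsilon)$, expands $\bigl|\nabla v_\sigma-\frac{v_\sigma}{w+\varepsilon}\nabla w\bigr|^2\ge 0$, lets $\varepsilon\to0$ and then $\sigma\to0$, and gets the reverse inequality and the quotient identity by using $w$ as a test function. So it never needs a minimizer, whose existence is not guaranteed when $\Omega_{out}$ is merely of finite perimeter, as you rightly flag in Step 1.

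Your restriction to bounded $\varphi$ is extra care the paper omits, and it is what makes $\varphi^2/(w+\varepsilon)$ admissible in $W^{1,2}(\Omega)$. One caution: regularize as $w+\varepsilon$ rather than truncating to $\max\{w,\varepsilon\}$. With $w+\varepsilon$ you have $\nabla(w+\varepsilon)=\nabla w$, so Picone's identity matches the equation exactly. With the truncation, a cross term on $\{w<\varepsilon\}$ is left over that monotone or dominated convergence does not obviously control.
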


\begin{proof}

 By \eqref{RN}, for all $\sigma>0$ there exists a function $v_\sigma$ such that $$\lambda^{RN}(\Omega)<\frac{\displaystyle \int_\Omega \abs{\nabla v_\sigma}^2 dx +\beta_{in}\int_{\partial \Omega_{in}}v_\sigma^2 \,\,d\mathcal{H}^{n-1}}{\displaystyle \int_{\Omega}v_\sigma^2 \,\,dx}<\lambda^{RN}(\Omega)+\sigma,$$
hence
\begin{equation}
\label{ineqvsigma}
  \displaystyle \int_\Omega \abs{\nabla v_\sigma}^2 dx +\beta_{in}\int_{\partial \Omega_{in}}v_\sigma^2 \,\,d\mathcal{H}^{n-1}<\left(\lambda^{RN}(\Omega)+\sigma\right)\int_{\Omega}v_\sigma^2 \,\,dx.
\end{equation}

Using the following function $\displaystyle z=\frac{v_\sigma^2}{w+\varepsilon}$, for $\varepsilon>0$, as a test in \eqref{weakrn}, we get 
\begin{equation}
\label{testz}
    \int_{\Omega} \frac{2 v_\sigma \nabla w \nabla v_\sigma}{w+\varepsilon} - \frac{\nabla w^2 v_\sigma^2}{(w+\varepsilon)^2}\,\,dx +\beta_{in} \int_{\partial \Omega_{in}}\frac{w}{w+\varepsilon}v_\sigma^2 \,\,d \mathcal{H}^{n-1}=\lambda\int_{\Omega}\frac{w}{w+\varepsilon}v_\sigma^2 \,\,dx.
\end{equation}

By subtracting \eqref{testz} by \eqref{ineqvsigma}, we obtain

\begin{equation}
\begin{aligned}
     0  &\leq \int_\Omega \abs{\nabla v_\sigma -\frac{ v_\sigma}{w+\varepsilon}\nabla w}^2dx= \int_\Omega \left(\abs{\nabla v_\sigma}^2 - \frac{2 v_\sigma \nabla w \nabla v_\sigma}{w+\varepsilon} + \frac{ v_\sigma^2}{(w+\varepsilon)^2}\abs{\nabla w}^2\right)\,\,dx   \\&<-\beta_{in} \int_{\partial \Omega_{in}}\frac{\varepsilon}{w+\varepsilon}v_\sigma^2 \,\,d \mathcal{H}^{n-1}+\left(\lambda^{RN}(\Omega)+\sigma\right)\int_{\Omega}v_\sigma^2 \,\,dx-\lambda\int_{\Omega}\frac{w}{w+\varepsilon}v_\sigma^2 \,\,dx.
    \end{aligned}
\end{equation}
Letting $\varepsilon \rightarrow 0$, it follows that
\begin{equation}
     \left(\lambda^{RN}(\Omega)+\sigma\right)\int_{\Omega}v_\sigma^2 \,\,dx-\lambda\int_{\Omega}v_\sigma^2 \,\,dx\geq 0,
\end{equation}
which implies $$\left(\lambda^{RN}(\Omega)+\sigma\right)-\lambda\geq 0.$$
In conclusion, taking $\sigma$ to zero, we get $$\lambda^{RN}(\Omega)\geq\lambda.$$
Using $w$ as a test in \eqref{RN}, we find the reverse inequality and so $$\lambda^{RN}(\Omega)=\lambda.$$

The last property follows by taking 
$w$ as a test function in \eqref{weakrn}.
\end{proof}

\begin{oss}
\label{tutticasi}
Let $\beta_{\mathrm{out}} > 0$, and let $\Omega_{\mathrm{in}} \subset\subset \Omega_{\mathrm{out}} \subset \mathbb{R}^n$, where $\Omega_{\mathrm{out}}$ is Lipschitz and $\Omega_{\mathrm{in}}$ is an open set of finite perimeter. Let $\Omega = \Omega_{\mathrm{out}} \setminus \overline{\Omega}_{\mathrm{in}}$, and consider the Sobolev space $W^{1,2}(\Omega)$ as in \cite[Chapter~1]{mazya}.

We define the first eigenvalue of $\Omega$ as
\begin{equation}
\label{Nr}
\lambda^{NR}(\Omega)
= \inf_{\substack{\varphi \in W^{1,2}(\Omega) \\ \varphi \ne 0}}
\frac{\displaystyle 
      \int_\Omega |\nabla \varphi|^2\, dx 
      + \beta_{\mathrm{out}} \int_{\partial \Omega_{\mathrm{out}}} \varphi^2\, d\mathcal{H}^{n-1}
     }
     {\displaystyle 
      \int_\Omega \varphi^2\, dx
     }.
\end{equation}

Lemma \ref{autofpos} still holds if one have $\lambda \geq 0$ and $w$ a positive function such that
\begin{equation}
\label{weaknr}
    \int _\Omega \nabla w \nabla \varphi \, dx+\beta_{out} \int_{\partial\Omega_{out}} w \varphi \, d \mathcal{H}^{n-1} = \lambda\int_{\Omega} w \varphi \, dx, \qquad \forall \varphi \in W^{1,2}(\Omega).
    \end{equation}

\end{oss}

For the sake of clarity, we now recall the result obtained by Della Pietra and Piscitelli~\cite[Theorem~1.1]{DPP}, extending it to a slightly larger class of sets.


\begin{teorema}[\cite{DPP}, Theorem~1.1] \label{DPP}
 Let $\Omega= \Omega_{out } \setminus \overline{\Omega}_{in}$ with $\Omega_{in}$ a bounded, open and convex set and $\Omega_{out}$ a set of finite perimeter, and let $\lambda^{RN}(\Omega)$ be the eigenvalue defined in \eqref{RN}.

    If  ${A}_{r,R}$  is spherical shell such that
 $$
\abs{\Omega}= \abs{{A}_{r,R}} \qquad 
    W_{n-1}( \Omega_{in}) =    W_{n-1}( B_r),$$
     then 
     \begin{equation}
         \label{piscitellide}
         \lambda^{RN} (\Omega) \leq \lambda^{RN}(A_{r,R}).
     \end{equation}

  Moreover, if equality occurs in \eqref{piscitellide} then, up to translations
     $$\Omega= A_{r,R}.$$
\end{teorema}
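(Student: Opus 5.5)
We follow the weighted-rearrangement scheme of Della Pietra and Piscitelli. The needed extensions are handled by approximation: $\Omega_{out}$ only of finite perimeter, with the Sobolev space in the sense of Maz'ya, plus rigidity.

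\emph{Test function.} Let $\psi$ be the radial eigenfunction of $\lambda^{RN}(A_{r,R})$, written $\psi(|x|)$. By Proposition~\ref{autofunzioni}(i), $\psi$ is increasing in $[r,R]$. Let $\Omega_{in}$ be convex and put $d(x)=\operatorname{dist}(x,\Omega_{in})$. We use the test function
\[
w(x)=\psi\bigl(r+d(x)\bigr) \ \text{ for } r+d(x)\le R, \qquad w(x)=\psi(R) \ \text{ otherwise}.
\]
Since $\psi'(R)=0$, this $w$ is Lipschitz on $\mathbb{R}^n\setminus\Omega_{in}$. It therefore belongs to $W^{1,2}(\Omega)$ for any $\Omega_{out}$ of finite perimeter; no regularity of $\Omega_{out}$ is needed, because $\lambda^{RN}$ puts no boundary term on $\partial\Omega_{out}$.

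\emph{Comparison with the shell.} The level sets of $d$ are the outer parallel sets $\Omega_{in}+tB$. By the Steiner formulas \eqref{Steiner_formula}--\eqref{Steiner_quermass} and Lemma~\ref{lemmadp}, normalizing $W_{n-1}(\Omega_{in})=W_{n-1}(B_r)$ gives
\[
P(\Omega_{in}+tB)\le P(B_{r+t}), \qquad |(\Omega_{in}+tB)\setminus\Omega_{in}|\le |A_{r,r+t}|.
\]
The boundary term is handled the same way: $\beta_{in}\psi(r)^2P(\Omega_{in})\le\beta_{in}\psi(r)^2P(B_r)$. The Dirichlet integral is estimated by coarea along $d$. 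Since $|\nabla d|=1$, it equals $\int\psi'(r+t)^2P(\Omega_{in}+tB)\,dt$, which is at most the corresponding integral over the shell.

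For the denominator we need the reverse direction. We reparametrize by enclosed volume: let $\rho(t)$ be defined by $|A_{r,\rho(t)}|=|(\Omega_{in}+tB)\setminus\Omega_{in}|$, so that $\rho(t)\le r+t$ and $\psi(r+t)\ge\psi(\rho(t))$ by monotonicity. The part of $\Omega$ where $r+d\ge R$ carries the value $\psi(R)=\max\psi$. Because $|\Omega|=|A_{r,R}|$, a layer-cake and Hardy--Littlewood argument gives
\[
\int_\Omega w^2\,dx\ \ge\ \int_{A_{r,R}}\psi^2\,dx .
\]
This is the estimate I expect to be the main obstacle. The numerator must be compared along the same parametrization as the denominator. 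DPP handle this by writing everything in terms of the distribution function of $w$ and using Lemma~\ref{lemma_derivata_perimetro_2}, and I would follow that route. Combining the two estimates in the Rayleigh quotient \eqref{RN} gives \eqref{piscitellide}.

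\emph{Rigidity.} Equality in \eqref{piscitellide} forces equality in the perimeter comparison for a.e.\ $t$ in a set of positive measure. In particular it forces equality in the Aleksandrov--Fenchel inequality \eqref{Aleksandrov_Fenchel} behind Lemma~\ref{lemmadp}, so $\Omega_{in}$ is a ball, $B_r$ up to translation. Equality in the denominator then forces $\Omega\cap\{d<R-r\}=A_{r,R}$ up to null sets and $|\Omega\setminus A_{r,R}|=0$. Since $\Omega$ is open, this gives $\Omega=A_{r,R}$.
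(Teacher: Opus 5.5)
Your proposal is essentially the paper's proof. Since $G^{-1}(t)=\psi^{-1}(t)-r$, the paper's test function $G\bigl(d(x,\Omega_{in})\bigr)$ is exactly your $\psi(r+d)$, and the paper bounds the numerator (coarea, Steiner and Lemma~\ref{lemmadp}), the denominator (sublevel-set comparison $\mu(t)\le\nu(t)$ with $|\Omega|=|A_{r,R}|$ and layer cake) and $\Omega_{in}$ (equality in Aleksandrov--Fenchel) just as you describe. The two estimates combine directly, because the numerator for $w$ is at most the one for $\psi$ and the denominator at least the one for $\psi$, so no common parametrization or Lemma~\ref{lemma_derivata_perimetro_2} is needed (the paper does not use it); for $\Omega_{out}$ you use equality in the denominator with strict monotonicity of $\psi$, while the paper tests \eqref{weakrn2} against a function supported where $w\equiv\psi(R)$, and either works.
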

\begin{proof}
Let us denote $A:=A_{r,R}$ and let us consider the positive solution $v$ of the problem \eqref{RN} on $A$.
By the radial properties of $v$ described in Proposition \ref{autofunzioni}, the function
    \[
    g(t)= \abs{\nabla v}_{v=t}
    \]
    is well defined for all $t\in (v_m, v_M)$, where $\displaystyle{v_m:=\min_{A} v}$ and $\displaystyle{v_M:= \max_{A} v}$.
    Let us define 
    \begin{equation*}
        \label{fug}
        G^{-1}(t)=\int_{v_m}^{t} \frac{1}{g(s)}\, ds. \qquad v_m < t <v_M.
    \end{equation*}
    Since $G^{-1}$ is strictly increasing, its inverse $G$ is well defined and it is an increasing function satisfying $G(0)=v_m$. So we can construct, for $x \in \Omega$,\begin{equation}
        \label{w=gd}
        w(x) =\begin{cases}
             G( d(x, \Omega_{in})) \, & \text{if }  d(x, \Omega_{in}) \leq R-r;\\
             v_M & \text{if } d(x, \Omega_{in}) > R-r.
        \end{cases}
    \end{equation}
    By construction, $u\in W^{1,p}(\Omega)$ and

    \begin{itemize} 
        \item $w_m:=\displaystyle \min_{\Omega} w= G(0)=v_m=\min_{A} v$;

        \item $w_M:=\displaystyle \max_{\Omega} w<v_M;$ 
        
        \item the chain rule formula implies 
        \[
        \abs{\nabla w}_{w=t}= \abs{G'(d(x))}_{w=t} = \Bigl \lvert g \Bigl( G \bigl (d(x ) \bigr) \Bigr) \Bigr \rvert_{w=t} = g(t)= \abs{\nabla v}_{v=t}.
        \]
        
    \end{itemize}

If we denote by $$E_t \subset \{x \in \Omega\, : \, w <t\}, \qquad F_t=\overline B_{r} \cup \{x \in A_{r,R}\, : \, v <t\},$$
it is not difficult to observe that

$$E_t\subset  \{x \in \R^{n}\, : \, d(x) <G^{-1}(t)\}:= \tilde E_t, \qquad F_t=\{x \in \R^n\, : \, \abs{x} <r+G^{-1}(t)\}.$$

By the Steiner formula  \eqref{Steiner_formula} and the Alexandrov-Fenchel inequalities \eqref{Aleksandrov_Fenchel}, we get, as $\rho = G^{-1}(t)$, that

\begin{equation}
\label{p<p}
    \begin{multlined}
    \mathcal{H}^{n-1}(\partial E_t \cap \Omega) \leq P(\tilde E_t) = P(D+\rho B_1) = n \sum_{k=0}^{n-1} \binom{n-1}{k} W_{k+1}(\Omega_{in})\rho^k \\
        \leq n \sum_{k=0}^{n-1} \binom{n-1}{k} W_{k+1}(B_r)\rho^k  = P(B_r +\rho B_1)= P(F_t).
    \end{multlined}
\end{equation}
 By the coarea formula, \eqref{p<p} and the fact that $u(x)=u_m=v_m$ on $\partial \Omega_{in}$, we have
 \begin{equation}
     \label{disnormabordo}
      \begin{gathered}
     \int_{\Omega} \abs{\nabla w}^2 \, dx = \int_{v_m}^{v_M} g(t)\mathcal{H}^{n-1}(\partial E_t \cap \Omega) \, dt \leq \int_{v_m}^{v_M} g(t) P(F_t) \, dt = \int_{A_{r,R}} \abs{\nabla v}^2 \, dx;\\
     \int_{\partial \Omega_{in}} w^2 \, d\mathcal{H}^{n-1} = w^2P(\Omega_{in}) \leq v^2P(A_{r,R}) =\int_{\partial B_r} v^2 \, d\mathcal{H}^{n-1}.
 \end{gathered}
 \end{equation}

Now, we define $\mu(t)= \abs{E_t \cap \Omega}$ and $\nu(t) = \abs{F_t \cap A_{r,R}}$, and using again the coarea formula for $v_m \leq t < v_M$, we get
\begin{equation}
    \begin{multlined}
        \mu'(t) = \int_{\{w=t\} \cap \Omega} \frac{1}{\abs{\nabla w}} \, d \mathcal{H}^{n-1} = \frac{ \mathcal{H}^{n-1}( \partial E_t) \cap \Omega}{ g(t)} = \frac{ P( \tilde E_t) }{ g(t)}
        \leq \frac{ P( F_t) }{ g(t)} = \int_{\{v=t\}} \frac{1}{\abs{\nabla v}} \, d \mathcal{H}^{n-1} = \nu'(t).
    \end{multlined}
\end{equation}

Since for $t<v_m$ we have $\mu(t)=\nu(t)=0$, by integrating we have for $0 < t <v_M$
$$\mu(t) \leq \nu(t).$$
 Hence, we get
    \begin{equation}
    \label{disnorma2}
    \displaystyle \int_\Omega w^2 dx= \int_0^{v_M} 2t(|\Omega|-\mu(t))dt\ge\int_0^{v_M} 2t(|A|-\eta(t))dt=\int_\Omega v^2dx.
    \end{equation}
    Using \eqref{disnormabordo} and \eqref{disnorma2}, we obtain
    \begin{equation}
    \label{a24}
    \lambda^{RN}(\Omega)\le \frac{\displaystyle \int_\Omega \abs{\nabla w}^2 dx +\beta_{in}\int_{\partial \Omega_{in}}w^2 \,\,d\mathcal{H}^{n-1}}{\displaystyle \int_{\Omega}w^2 \,\,dx}\le \frac{\displaystyle \int_A \abs{\nabla v}^2 dx +\beta_{in}\int_{\partial B_r}v^2 \,\,d\mathcal{H}^{n-1}}{\displaystyle \int_{A}v^2 \,\,dx}= \lambda^{RN}(A).
    \end{equation}
    To conclude the proof, we analyze the equality case in \eqref{piscitellide} and establish rigidity. 
First, we observe that this implies equality in \eqref{p<p}, as well as in the Aleksandrov--Fenchel inequality for $\Omega_{in}$. Consequently, up to translations, we obtain
\[
\Omega_{in} = B_{r}.
\]
Moreover, using $\lvert \Omega \rvert = \lvert A_{r,R} \rvert$, we deduce that $\Omega_{out}$ is also a ball, not necessarily concentric with $B_r$ and it holds
\begin{equation}
\label{weakrn2}
\int_{\Omega} \nabla w \cdot \nabla \varphi \, dx 
+ \beta_{in} \int_{\partial B_r} w \varphi \, d\mathcal{H}^{n-1}
= \lambda^{RN}(\Omega) \int_{\Omega} w \varphi \, dx,
\qquad \forall \varphi \in W^{1,2}(\Omega).
\end{equation}

We now have to show that $\Omega_{out} = B_R$, i.e.\ it is concentric with $B_r$.

We argue by contradiction. Assume that $\Omega_{out} \neq B_R$. If we consider  a nonnegative test function $\varphi$, not identically zero and compactly supported in the interior of $\Omega_{out} \setminus \overline{B}_R$, since $w$ is a positive constant in $\Omega_{out} \setminus \overline{B}_R$, we obtain
\begin{equation}
0 = \int_{\Omega} \nabla w \cdot \nabla \varphi \, dx 
+ \beta_{in} \int_{\partial B_r} w \varphi \, d\mathcal{H}^{n-1}
= \lambda^{RN}(\Omega) \int_{\Omega} w \varphi \, dx > 0,
\end{equation}
which is a contradiction.
Therefore, $\Omega_{out} = B_R$, and it is concentric with $\Omega_{in} = B_r$.

\end{proof}

Finally, we recall the results concerning the Neumann–Robin setting under weaker assumptions then those considered in \cite[Theorem~3.1]{Paoli_Piscitelli_Trani}. Also in this case, the proof differs only slightly from the original argument.

\begin{teorema}\label{PPT}
    Let $\Omega_{in} \subset \subset \Omega_{out} \subset \R ^n$ two open sets, with $\Omega_{in}$ having finite perimeter and $\Omega_{out}$ being convex. Let $\Omega = \Omega_{out} \setminus\overline{\Omega}_{in}$ and let $\lambda^{NR}(\Omega)$ denote the first mixed Neumann-Robin eigenvalue, with $\beta_{out}\in \mathbb{R}\setminus \{0\}$.
    If $R>r>0$ are such that the spherical shall ${A}_{r,R}$ has the properties
    $$
    \abs{\Omega}= \abs{{A}_{r,R}}, \qquad \mathcal{H}^{n-1}(\partial B_R)=\mathcal{H}^{n-1}(\partial \Omega_{out}), $$ 
then we have
     \begin{equation}
         \label{ptteq}
     \lambda^{NR}(\Omega)\leq \lambda^{NR}({A}_{r,R}).
     \end{equation}

      Moreover, if equality occurs in \eqref{ptteq} then, up to translations
     $$\Omega= A_{r,R}.$$
\end{teorema}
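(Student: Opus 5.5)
The plan is to mirror the proof of Theorem~\ref{DPP}, now working from the outer boundary: build a test function on $\Omega$ that depends only on the distance from $\partial\Omega_{out}$ and is modelled on the radial eigenfunction of $A:=A_{r,R}$. Let $v>0$ be the first eigenfunction for $\lambda^{NR}(A)$. By Proposition~\ref{autofunzioni}(ii) it is radially decreasing, with minimum $v_m$ on $\partial B_R$ and maximum $v_M$ on $\partial B_r$. (For $\beta_{out}<0$ the monotonicity reverses and one argues symmetrically; below I treat $\beta_{out}>0$.) Set $g(t)=|\nabla v|_{v=t}$ and $G^{-1}(t)=\int_{v_m}^t ds/g(s)$, which is strictly increasing. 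Define
\[
w(x)=G\bigl(d(x,\partial\Omega_{out})\bigr)\ \text{if } d(x,\partial\Omega_{out})\le R-r,\qquad w(x)=v_M\ \text{otherwise}.
\]
Then $w\in W^{1,2}(\Omega)$, $w=v_m$ on $\partial\Omega_{out}$, and $|\nabla w|_{w=t}=g(t)=|\nabla v|_{v=t}$.

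For the sublevel sets $E_t=\{x\in\Omega: w<t\}$, the set $\{w>t\}$ is contained in the inner parallel set $(\Omega_{out})_\rho$ with $\rho=G^{-1}(t)$, and the comparison set is $F_t=\{v<t\}=A_{R-\rho,R}$. The key geometric estimate is
\[
\mathcal H^{n-1}(\{w=t\}\cap\Omega)\le P\bigl((\Omega_{out})_\rho\bigr)\le P(B_{R-\rho}).
\]
This follows from Lemma~\ref{lemma_derivata_perimetro_1}. Combining $-\frac{d}{d\rho}P((\Omega_{out})_\rho)\ge n(n-1)W_2$ with \eqref{AF_W2} gives a differential inequality for $\rho\mapsto P((\Omega_{out})_\rho)$ whose equality case is the family of balls. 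Since $P(\Omega_{out})=P(B_R)$ at $\rho=0$, a comparison (ODE) argument yields the bound for all $\rho$. Neither the convexity of $\Omega_{in}$ nor its regularity enters here: $\Omega_{in}$ only removes mass, so the level sets of $w$ inside $\Omega$ are contained in those of the distance function on $\Omega_{out}$.

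The three terms of the Rayleigh quotient are then compared as follows.
\begin{itemize}
\item Gradient term: by the coarea formula, $\int_\Omega|\nabla w|^2=\int g(t)\,\mathcal H^{n-1}(\{w=t\}\cap\Omega)\,dt\le\int_A|\nabla v|^2$.
\item Boundary term: it is equal on both sides, $\beta_{out}v_m^2P(\Omega_{out})=\beta_{out}\int_{\partial B_R}v^2$.
\item $L^2$ term: for $\mu(t)=|E_t|$ and $\nu(t)=|F_t|$, coarea gives $\mu'(t)\le\nu'(t)$ and $\mu(v_m)=\nu(v_m)=0$, so $\mu\le\nu$. Since $|\Omega|=|A|$, this gives $\int_\Omega w^2\ge\int_A v^2$. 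Here $w$ is capped at $v_M$, and the set where $w=v_M$ carries the leftover mass; this only helps the inequality.
\end{itemize}
Inserting $w$ into \eqref{Nr} yields \eqref{ptteq}.

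For rigidity, equality forces equality in the perimeter comparison for a.e.\ $\rho$, hence equality in \eqref{AF_W2}, so $\Omega_{out}$ is a ball of radius $R$. Equality in the Rayleigh quotient makes $w$ a positive minimizer, hence a weak solution of \eqref{weaknr}. If $\Omega_{in}\ne B_r$ concentrically, two cases arise.
\begin{itemize}
\item If $\Omega_{in}\subsetneq B_r$ (up to a null set), $w\equiv v_M$ on the nonempty open set $B_r\setminus\overline\Omega_{in}$, and testing with $\varphi\ge0$ supported there gives $0=\lambda\int w\varphi>0$, a contradiction.
\item If $\Omega_{in}$ meets $\Omega_{out}\setminus\overline B_r$, the level sets $\{w=t\}$ lose area, so the inequality $\mu\le\nu$ becomes strict, again a contradiction.
\end{itemize}

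The main obstacle is the perimeter comparison for inner parallel sets, i.e.\ integrating Lemma~\ref{lemma_derivata_perimetro_1} together with Aleksandrov--Fenchel to get $P((\Omega_{out})_\rho)\le P(B_{R-\rho})$ up to the inradius. A secondary obstacle is the case where the inradius of $\Omega_{out}$ is smaller than $R-r$, so that $w$ never reaches $v_M$; I would handle it via the monotonicity of the $L^2$ comparison.
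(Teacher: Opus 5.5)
Your test function $w=G(d(\cdot,\partial\Omega_{out}))$ and the three comparisons are the web-function argument of \cite{Paoli_Piscitelli_Trani}. The paper invokes that argument without writing it out; it is the outer-boundary mirror of the paper's proof of Theorem~\ref{DPP}. Your derivation of \eqref{ptteq} for $\beta_{out}>0$ is correct. Two side remarks. First, your ``secondary obstacle'' never occurs: $|\Omega|\le|\Omega_{out}|=\int_0^{r_{\Omega_{out}}}P((\Omega_{out})_s)\,ds\le|B_R\setminus B_{R-r_{\Omega_{out}}}|$, together with $|\Omega|=|A_{r,R}|$, forces $r_{\Omega_{out}}\ge R-r$. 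Second, for $\beta_{out}<0$ the numerators are negative, so the quotient comparison needs $\int_\Omega w^2\le\int_A v^2$ rather than $\ge$. This does hold, since the superlevel sets of $w$ are then the layers $\{d<\rho\}\cap\Omega$, of measure at most $|B_R\setminus B_{R-\rho}|$, but it should be said.

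The genuine gap is the rigidity step ``equality in the perimeter comparison, hence equality in \eqref{AF_W2}, so $\Omega_{out}$ is a ball'' when $n=2$. In the plane $W_2=\omega_2=\pi$, so \eqref{AF_W2} is an identity and gives no information. Concretely, take a stadium $\Omega_{out}=S+aB_1$, with $S$ a segment, $2|S|+2\pi a=2\pi R$ and $R-r<a<R$. Then $(\Omega_{out})_\rho=S+(a-\rho)B_1$ and $P((\Omega_{out})_\rho)=2\pi(R-\rho)=P(B_{R-\rho})$ for all $\rho<a$. With $\Omega_{in}=(\Omega_{out})_{R-r}$ both constraints hold, and every inequality in your chain (gradient, boundary and $L^2$ terms) is an equality. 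So the Rayleigh quotients of $w$ and $v$ coincide although $\Omega$ is not an annulus. Hence equality throughout your chain does not force $\Omega_{out}$ to be a disk, and your cases (a)/(b), which presuppose $\Omega_{out}=B_R$, are never reached.

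The missing idea is to use the Euler--Lagrange equation (which you already invoke in case (a)) one step earlier. In the equality case $w$ minimizes \eqref{Nr}, so $-\Delta w=\lambda w$ in $\Omega$ with $\lambda=\lambda^{NR}(A_{r,R})$, and $w$ is smooth there. On a collar $\{0<d<\varepsilon\}\subset\Omega$ you have $w=G(d)$ with $G$ smooth and $G'>0$ (note $G'(0)=\beta_{out}v_m$), so $d=G^{-1}\circ w$ is smooth. Subtracting the radial equation $G''(\rho)-\frac{n-1}{R-\rho}G'(\rho)=-\lambda G(\rho)$ from $G''(d)+G'(d)\Delta d=-\lambda G(d)$ gives $-\Delta d=\frac{n-1}{R-d}$. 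Thus each parallel hypersurface $\{d=\rho\}$ has constant mean curvature $1/(R-\rho)$. It is therefore a circle if $n=2$, and a sphere by Alexandrov's theorem in general, so $\Omega_{out}$ is a ball of radius $R$, and your case analysis then concludes. For $n\ge3$ your AF-based step is acceptable, given the stated equality case in \eqref{Aleksandrov_Fenchel}, but the curvature argument works in every dimension.
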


\section*{Acknowledgments}
The authors would like to thank Cristina Trombetti for helpful discussions on this problem.

The authors were partially supported by Gruppo Nazionale per l’Analisi Matematica, la Probabilità e le loro Applicazioni
(GNAMPA) of Istituto Nazionale di Alta Matematica (INdAM).

\bibliographystyle{plain}
\bibliography{biblio}
\Addresses
\end{document}